\theoremstyle{plain}
\newtheorem{theorem}{Theorem}[section]
\newaliascnt{lemma}{theorem}
\newtheorem{lemma}[lemma]{Lemma}
\crefname{lemma}{lemma}{lemmas}
\Crefname{lemma}{Lemma}{Lemmas}
\newaliascnt{proposition}{theorem}
\newtheorem{proposition}[proposition]{Proposition}
\crefname{proposition}{proposition}{propositions}
\Crefname{proposition}{Proposition}{Propositions}
\newaliascnt{conjecture}{theorem}
\newtheorem{conjecture}[conjecture]{Conjecture}
\crefname{conjecture}{conjecture}{conjectures}
\crefname{conjecture}{Conjecture}{Conjectures}
\newaliascnt{claim}{theorem}
\crefname{claim}{claim}{claims}
\Crefname{claim}{Claim}{Claims}
\newaliascnt{corollary}{theorem}
\newtheorem{corollary}[corollary]{Corollary}
\crefname{corollary}{corollary}{corollaries}
\Crefname{corollary}{Corollary}{Corollaries}
\newtheorem*{main-theorem}{Main Theorem}
\crefname{main-theorem}{Main Theorem}{Main Theorems}
\Crefname{main-theorem}{Main Theorem}{Main Theorems}
\crefname{goal}{Goal}{Goals}
\Crefname{goal}{Goal}{Goals}
\crefname{wish}{Wish}{Wishes}
\Crefname{wish}{Wish}{Wishes}
\crefname{guess}{Guess}{Guesses}
\Crefname{guess}{Guess}{Guesses}
\newaliascnt{definition}{theorem}
\theoremstyle{definition}
\newtheorem{definition}[definition]{Definition}
\crefname{definition}{definition}{definitions}
\Crefname{definition}{Definition}{Definitions}
\newtheorem*{notation}{Notation}
\newaliascnt{construction}{theorem}
\crefname{construction}{construction}{constructions}
\Crefname{construction}{Construction}{Constructions}
\newaliascnt{example}{theorem}
\newtheorem{example}[example]{Example}
\crefname{example}{example}{examples}
\Crefname{example}{Example}{Examples}
\newaliascnt{assumption}{theorem}
\crefname{assumption}{assumption}{assumptions}
\Crefname{assumption}{Assumption}{Assumptions}
\newaliascnt{question}{theorem}
\crefname{question}{question}{questions}
\Crefname{question}{Question}{Questions}
\newaliascnt{fact}{theorem}
\crefname{fact}{fact}{facts}
\Crefname{fact}{Fact}{Facts}
\theoremstyle{remark}
\newtheorem{remark}{Remark}[section]
\crefname{remark}{remark}{remarks}
\Crefname{remark}{Remark}{Remarks}
\definecolor{delete}{cmyk}{0, 0.7808, 0.4429, 0.1412}
\newcommand{\Mn}{\mathbb{M}_n} 
\providecommand{\leftsquigarrow}{%
  \mathrel{\mathpalette\reflect@squig\relax}%
}
\newcommand{\reflect@squig}[2]{%
  \reflectbox{$\m@th#1\rightsquigarrow$}%
}
\DeclareMathOperator{\Res}{Res} 
\DeclareMathOperator{\Sym}{Sym} 
\DeclareMathOperator{\UConf}{UConf} 
\DeclareMathOperator{\Rep}{Rep} 
\newcommand{\T}{\mathrm{T}} 
\newcommand{\et}{\mathrm{et}} 
\renewcommand{\sp}{\mathrm{sp}} 
\newcommand{\CH}{\mathbf{ch}} 
\renewcommand{\ch}{\CH} 
\newcommand{\B}{\mathcal{B}} 
\renewcommand{\P}{\mathcal{P}} 
\newcommand{\Gr}{\mathrm{Gr}} 
\newcommand{\uQ}{\underline{\Q}} 
\renewcommand{\et}{\mathrm{\acute{e}t}}
\newcommand{\D}{\mathbb{D}} 
\newcommand{\U}{\mathrm{U}} 
\newcommand{\UT}{\mathrm{UT}} 
\title{Matrix Points on Varieties}
\author{Asvin G}
\email{gasvinseeker94@gmail.com}
\author{Yifeng Huang}
\email{yifeng.huang@usc.edu}
\author{Ruofan Jiang}
\email{ruofanjiang@berkeley}
\author{Yifan Wei}
\email{yifan.wei@wisc.edu}
\keywords{cohomology, non-commutative geometry, algebraic geometry}
\begin{document}

\begin{abstract}
  We study the cohomology of $C_n(X)$, the moduli space of commuting $n$-by-$n$ matrices satisfying the equations defining a quasi-projective scheme $X$. This space can be viewed as a non-commutative Weil restriction from the algebra of $n$-by-$n$ matrices to the ground field. We introduce a semi-simple counterpart $S_n(X)$, defined as the quotient of $X^n \times \GL_n/\T_n$ by the diagonal $S_n$ action. We show that there exists a natural map $\sigma \colon S_n(X) \to C_n(X)$ inducing isomorphism on $\ell$-adic cohomology under mild restrictions on $X$ or the characteristic of the field. This confirms a heuristic derived from Weil restrictions. Furthermore, we provide explicit combinatorial formulae for the Betti numbers of $C_n(X)$ and prove a Macdonald-type generating series. A version for Hermitian matrix point is also proved in the last section.
\end{abstract}

\maketitle


\section{Introduction}
\label{sec: the introduction}

\subsection{Motivation: Non-commutative Weil restriction}
Let $X$ be a $k$-scheme, if $L$ is a separable extension of degree $n$ with Galois group $G$, and $\hat{L}$ is its Galois closure, the Weil restriction of $X_L$ to $k$ represents the following functor (for $k$-scheme $S$)
\[
  \Res^L_k(X_L)(S) = \Hom_k(S_L, X)
\]
It can be verified that $\Res^L_k(X_L)$ is represented by the twisted product $X^n \times^G \Spec(\hat{L})$ (the diagonal $G$-quotient of the product $X^n \times \Spec(\hat{L})$, where $G$ acts on the $X^n$ factor by permuting $n = \Hom_k(L, \hat{L})$.) 

In this paper, we explore an analogue of this construction in a non-commutative setting. We view the inclusion of a field $k$ into the algebra of $n$-by-$n$ matrices, $\Mn(k)$, as a non-commutative analogue of a degree $n$ ``separable field extension'', and we study the Weil restriction functor from the matrix algebra to the ground field.

Let $X = \Spec R$ be a finite type affine $k$-scheme. We define the functor $C_n(X)$ on commutative $k$-algebras $S$ by
\[
C_n(X)(S) = \Hom_{k\text{-}\mathrm{alg}}(R, \Mn(S)).
\]
If $R$ is generated by commuting variables $x_i$ satisfying relations $P_j(x_i) = 0$, then $C_n(X)$ represents the space of commuting $n$-by-$n$ matrices $X_i$ satisfying the same relations $P_j(X_i) = 0$. This space is representable by a finite type affine scheme.

\begin{example}
  Matrix points recover, as special cases, several classical examples of commuting varieties: $C_n(\A^N)$ is the variety of commuting $N$ tuples in $\mathfrak{gl}_n$, $C_n(\Gm^N)$ is the variety of commuting $N$ tuples in $\GL_n$. When $X = \A^1 - \{a_1, a_2, \dots, a_r\}$, the space $C_n(X)$ parametrizes matrices avoiding eigenvalues $a_1, a_2, \dots, a_r$.
\end{example}

The space $C_n(X)$ admits a moduli interpretation: it parametrizes length $n$ coherent sheaves $M$ on $X$ equipped with a global framing, i.e., an isomorphism $\iota \colon \Gamma(X, M) \isoto k^n$. This interpretation generalizes to arbitrary quasi-projective schemes $X$. As a moduli space $C_n(X)$ is a $\GL_n$ torsor of the Artin stack $\Coh_n(X)$, the moduli space of length $n$ coherent sheaves on $X$.

\subsection{A Semi-simple Counterpart of $C_n(X)$}

There exists a natural map $\hat{\sigma} \colon X^n \times \GL_n/\T_n \to C_n(X)$ which furnishes an ordered direct sum decomposition $\bigoplus_{i = 1}^n L_i = k^n$ with the structure of a semi-simple framed sheaf by equipping $L_i$ with the structure of the sky-scrapper sheaf $k(x_i)$ at $x_i$. For example, when $X = \A^d$, the map $\hat{\sigma}$ takes a $d$-tuple of diagonal $n$-by-$n$ matrices $(D_1, D_2, \dots, D_d)$ and a right coset $[g] \in \GL_n/\T_n$ to the $d$-tuple of commuting matrices $(g D_1 g^{-1}, g D_2 g^{-1}, \dots, g D_d g^{-1})$. 

Note that there exists a diagonal $S_n$ action on $X^n \times \GL_n/\T_n$, under which the map $\hat{\sigma}$ is invariant. Let us denote the quotient space $X^n \times^{S_n} \GL_n/\T_n$ by $S_n(X)$, and the induced map by $\sigma \colon S_n(X) \to C_n(X)$. The image of $S_n(k)$ under $\sigma$ is precisely the set of semi-simple framed sheaves, and over the dense open subscheme $\UConf^n X \subset \Sym^n X$, $\sigma$ is an isomorphism. However globally $\sigma$ is neither injective nor surjective on geometric points. 

\subsection{Main Results}

We show in this paper that $\sigma \colon S_n(X) \to C_n(X)$ identifies the cohomologies of both spaces.

\begin{theorem}
\label{thm: main result}
  Suppose we are in either of the following situations:
  \begin{itemize}
    \item $k$ is of characteristic zero, or
    \item $X$ is locally homogeneous, i.e., any local ring $R$ at a closed point of $X$ admits a non-negative grading.
  \end{itemize}
  Then the map $\sigma \colon S_n(X) \to C_n(X)$ induces isomorphisms
  \[H_{\et}^i(C_n(X), \Q_\ell) \isoto H_{\et}^i(S_n(X), \Q_\ell)\]
  for all $i$.
\end{theorem}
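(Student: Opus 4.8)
The plan is to upgrade the statement to an isomorphism of complexes on $\Sym^n X$. Write $p_2\colon C_n(X)\to\Sym^n X$ and $p_1\colon S_n(X)\to\Sym^n X$ for the two support maps, so that $p_1=p_2\circ\sigma$, and let $Rp_{2*}\Q_\ell\to Rp_{1*}\Q_\ell$ be the morphism obtained by applying $Rp_{2*}$ to the unit $\Q_\ell\to R\sigma_*\Q_\ell$. Both sides are constructible (finiteness of $R(-)_*$ for the finite-type maps $p_1,p_2$), so this morphism is an isomorphism as soon as it is one on every stalk; applying $R\Gamma(\Sym^n X,-)$ then produces the asserted isomorphism $H^*_{\et}(C_n(X),\Q_\ell)\xrightarrow{\sim}H^*_{\et}(S_n(X),\Q_\ell)$, which is exactly $\sigma^*$.

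The stalk at a geometric point $D=\sum_{i=1}^r m_i[x_i]$, with the $x_i$ distinct, is the $\ell$-adic cohomology of the fibre over the henselization of $\Sym^n X$ at $D$, and here the factorization structure of the support map takes over. \'Etale-locally around $D$ a length-$n$ sheaf has support distributed among $r$ separated clusters of lengths $m_1,\dots,m_r$, and the canonical decomposition of such a sheaf into its clusters gives $C_n(X)\cong\bigl(\prod_{i}C_{m_i}(\widehat X_{x_i})\bigr)\times^{\prod_i\GL_{m_i}}\GL_n$ over this neighbourhood; here $C_m(\widehat X_x)$, the punctual Bosonic space, is $\Hom_{k\text{-}\mathrm{alg}}(\widehat{\mathcal O}_{X,x},\mathbb{M}_m)$ cut out by nilpotence of the image of the maximal ideal — a finite-type affine scheme, because any such representation factors through $\widehat{\mathcal O}_{X,x}/\mathfrak m^m$. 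The Fermionic space decomposes in exactly the same shape, $S_n(X)\cong\bigl(\prod_{i}S_{m_i}(\widehat X_{x_i})\bigr)\times^{\prod_i\GL_{m_i}}\GL_n$, where the punctual Fermionic space $S_m(\widehat X_x)$ — the fibre of $p_1$ over $m[x]$, thickened — has the $\ell$-adic cohomology of $\GL_m/N(\T_m)$ independently of $X$, since $(\widehat X_x)^m$ is $\ell$-adically a point carrying the trivial $S_m$-action. Under these identifications $\sigma$ is the $\GL_n$-induction of $\prod_i\sigma_{m_i}$. Forming associated bundles $(-)\times^{\prod_i\GL_{m_i}}\GL_n$ and taking products both preserve the property of inducing an $\ell$-adic cohomology isomorphism (compare Leray spectral sequences over the common homogeneous base), so the theorem reduces to the punctual statement: for every closed point $x$ and every $m\le n$, the map $\sigma_m\colon S_m(\widehat X_x)\to C_m(\widehat X_x)$ is an $\ell$-adic cohomology isomorphism.

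I would prove the punctual statement by showing that \emph{both} spaces are $\ell$-adically acyclic, i.e. have the $\ell$-adic cohomology of a point; connectedness then forces $\sigma_m$ to induce the isomorphism. For the Fermionic side, $H^*_{\et}(\GL_m/N(\T_m),\Q_\ell)=H^*_{\et}(\GL_m/\T_m,\Q_\ell)^{S_m}$, and since $\GL_m/\T_m\to\GL_m/B$ is an affine-space bundle over the flag variety, this ring is the $S_m$-coinvariant algebra, on which the right-translation action of $S_m$ is the regular representation (Chevalley); its invariants are therefore $\Q_\ell$ in degree $0$. For the Bosonic side set $\bar R=\widehat{\mathcal O}_{X,x}/\mathfrak m^m$, so that $C_m(\widehat X_x)=C_m(\Spec\bar R)$ is the scheme of commuting, pairwise nilpotent tuples in $\mathbb{M}_m$ subject to the defining relations of $\bar R$. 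When $X$ is locally homogeneous, $\bar R$ carries a non-negative grading with homogeneous relations, and this scheme is stable under the scaling $(A_a)\mapsto(t^{\deg x_a}A_a)$, all of whose weights are positive, so it is a cone with vertex the trivial representation — hence $\ell$-adically acyclic and connected. This completes the locally homogeneous case.

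In characteristic zero $\bar R$ need not admit a grading, and supplying a substitute is the crux of the argument; I expect this to be the main obstacle. The natural attempt is to degenerate $\bar R$ to its associated graded ring $\gr_{\mathfrak m}\bar R$ — the coordinate ring of the tangent cone, which \emph{is} graded — by the deformation to the normal cone, obtaining a $\mathbb{G}_m$-equivariant flat family $\mathcal C\to\mathbb{A}^1$ with $\mathcal C_1=C_m(\widehat X_x)$, with $\mathcal C_0=C_m(\Spec\gr_{\mathfrak m}\bar R)$ acyclic by the previous paragraph, and with $\mathcal C\setminus\mathcal C_0\cong\mathcal C_1\times\mathbb{G}_m$; one then wants to transport acyclicity from the special to the general fibre through the excision triangle of $\mathcal C_0\hookrightarrow\mathcal C$. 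The difficulty is that the $\mathbb{G}_m$-action on $\mathcal C$ has mixed weights and $\mathcal C,\mathcal C_0$ are typically singular, so neither a flow contraction onto $\mathcal C_0$ nor the purity isomorphism $i^!\Q_\ell\simeq\Q_\ell(-1)[-2]$ is available; genuine work in characteristic zero — for instance a $\mathbb{G}_m$-equivariant resolution of the total space of the degeneration, or a nearby-cycles analysis of this tame one-parameter family — is needed to recover the cohomological constancy $H^*_{\et}(\mathcal C_1,\Q_\ell)\cong H^*_{\et}(\mathcal C_0,\Q_\ell)$. Granting punctual acyclicity in both cases, the stalk-wise comparison holds, $Rp_{2*}\Q_\ell\xrightarrow{\sim}Rp_{1*}\Q_\ell$, and the theorem follows.
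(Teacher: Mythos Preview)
Your overall strategy---upgrading to an isomorphism $Rp_{2*}\uQ_\ell\to Rp_{1*}\uQ_\ell$ on $\Sym^n X$, checking it on stalks, and reducing via factorization to a punctual acyclicity statement---matches the paper's. The treatment of the locally homogeneous case (cone contraction via the grading) and of the Fermionic side ($H^*(\GL_m/\T_m)^{S_m}=\Q_\ell$) is also essentially the same.

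There are, however, two genuine gaps.

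First, the factorization you write, $C_n(X)\cong\bigl(\prod_{i}C_{m_i}(\widehat X_{x_i})\bigr)\times^{\prod_i\GL_{m_i}}\GL_n$ \emph{over the \'etale neighbourhood}, cannot hold as stated: you define $C_{m}(\widehat X_{x})$ to be the closed fibre $\mathrm{Nil}_{m,x}$, so your formula would make $p_2$ a locally constant family, which it is not. What factorization actually produces replaces these punctual fibres by the local families $C_{m_i}(X)$ over the henselizations of $\Sym^{m_i} X$ at $m_i[x_i]$, and then one must still identify the stalk $(Rp_{2*}\uQ_\ell)_{m[x]}$ with $H^*(\mathrm{Nil}_{m,x})$. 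This is a nontrivial base-change statement, since $p_2$ is not proper; on the Fermionic side you do supply the analogue (``$(\widehat X_x)^m$ is $\ell$-adically a point''), but for the Bosonic side you offer nothing. The paper singles this out as the main technical point: it factors $p_2$ as $C_n(X)\xrightarrow{q}\Coh_n(X)\xrightarrow{\mathsf{JH}}\Sym^n X$, notes that base change for the $\GL_n$-torsor $q$ is formal, and proves that $\mathsf{JH}$ is \emph{approximately proper} in the sense of Kinjo by approximating $\Coh_n(X)$ with the Quot schemes $\mathrm{Quot}_X(\mathcal O_X^m,n)$, which are genuinely proper over $\Sym^n X$. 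Without a substitute for this argument your reduction to the punctual case is not justified.

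Second, for the characteristic-zero acyclicity of $\mathrm{Nil}_{m,x}$, your deformation-to-the-normal-cone plan is, as you yourself concede, incomplete: you need to transport acyclicity across a singular, mixed-weight $\Gm$-family and you do not provide a mechanism for doing so. The paper takes an entirely different and much shorter route: the $\GL_m$-action on $C_m(\bar R)$ (for $\bar R$ Artinian local) has a \emph{unique closed orbit}, the trivial representation, and Neeman's theorem then gives an equivariant deformation retraction of the complex points onto that point. This is a characteristic-zero input of a rather different flavour from the one you propose, and it closes the argument without any degeneration analysis.
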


We conjecture that the assumptions on $X$ or $k$ can be removed (see \Cref{conjecture artin local ring}). For the rest of this paper, we always assume we are in the situation stated in \Cref{thm: main result} unless specified otherwise. Note that the assumption on $X$ automatically holds when $X$ is smooth.

This result validates the Weil restriction heuristics: we may think of $k \inclu \Mn(k)$ as a non-commutative version of a separable degree $n$ ``field extension''. Let's pretend that $S_n$ is the Galois group of this field extension, and that there exists a Galois closure $\hat{\Mn}(k)$ of the extension $k \inclu \Mn(k)$. Then from the classical theory of Weil restrictions, we expect
\[
  C_n(X) = X^n \times^{S_n} ``\Spec \hat{\Mn}(k)".
\]
On the other hand, the cohomology groups $H^*(\GL_n/\T_n)$ form the regular $S_n$ representation, except that it's graded in some non-trivial way (see \Cref{thm: graded character of the flag variety}). Thus, \Cref{thm: main result} implies that cohomologically, $C_n(X)$ behaves as $X^n \times^{S_n} \GL_n/\T_n$, aligning with the expectation that $H^*(\GL_n/\T_n)$ serves as the cohomological model for the ``Galois closure" of $``\Spec \Mn(k)" \to \Spec k$.

The construction of $S_n(X)$ allows for explicit computation of its cohomology using combinatorial methods involving symmetric functions. We define the Poincare polynomial $P_u(M) = \sum_i \dim H^i(M) (-u)^i$. Let $\phi_n(q) = \prod_{i = 1}^n (1 - q^i)$.

\begin{theorem}
\label{thm: combinatorial formula}
The Poincare polynomial of $C_n(X)$ is given by
\[
P_u(C_n(X)) = \phi_n(u^2) \sp_{u^2}(\ch_u(X^n)),
\]
where $\ch_u(X^n)$ is the graded $S_n$-character on the cohomology of $X^n$ (Definition \ref{def: graded character}), and $\sp_{u^2}$ is the principal specialization of symmetric polynomials $f(x_1, x_2, x_3, \dots) \mapsto f(1, u^2, u^4, \dots)$.
\end{theorem}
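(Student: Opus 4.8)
The plan is to combine the main cohomological equivalence $\sigma$ with a direct computation of $H^*(S_n(X))$, so that the formula for $P_u(C_n(X))$ follows from the one for $P_u(S_n(X))$. First I would observe that by \Cref{thm: main result} we have $P_u(C_n(X)) = P_u(S_n(X))$, so it suffices to compute the Poincaré polynomial of the Fermionic space $S_n(X) = X^n \times^{S_n} \GL_n/\T_n$. Since we are working with $\ell$-adic (or singular) cohomology with $\Q_\ell$-coefficients and a free-ish quotient, the cohomology of the twisted product is the $S_n$-invariants of the tensor product:
\[
  H^*(S_n(X), \Q_\ell) \cong \left( H^*(X^n, \Q_\ell) \otimes H^*(\GL_n/\T_n, \Q_\ell) \right)^{S_n},
\]
where $S_n$ acts diagonally (on $X^n$ by permuting factors, on the flag variety via the Weyl group action). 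Here one should be slightly careful: $\GL_n/\T_n$ is not the flag variety $\GL_n/B_n$ but rather a $\mathbb{G}_m^n$-torsor over it; however $\GL_n/\T_n \to \GL_n/B_n$ is an affine bundle (fibers $\mathbb{A}^{\binom n2}$ up to torsor twisting — more precisely $B_n/\T_n$ is an iterated extension of additive groups) and $S_n$-equivariantly a homotopy equivalence onto $\GL_n/B_n$, so $H^*(\GL_n/\T_n) \cong H^*(\GL_n/B_n)$ as graded $S_n$-representations. I would isolate this comparison as the first technical step.

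Next I would translate the right-hand side into symmetric function language. By the definition of the graded character (\Cref{def: graded character}), the graded $S_n$-representation $H^*(X^n, \Q_\ell) = H^*(X)^{\otimes n}$ has Frobenius characteristic $\ch_u(X^n)$, a symmetric function with coefficients in $\Z[u]$ (or $\Z[u,u^{-1}]$ if $X$ has odd cohomology, with the sign bookkeeping folded into the $(-u)^i$ convention of $P_u$). Meanwhile, the classical computation of the $S_n$-equivariant cohomology of the flag variety (which I expect is recorded as \Cref{thm: graded character of the flag variety}) gives $H^*(\GL_n/B_n, \Q_\ell)$ as the graded regular representation whose graded Frobenius characteristic is $\phi_n(q) \cdot h_1^n$ evaluated appropriately — concretely, taking $q = u^2$ (cohomological degree $2i$ carries weight $u^{2i}$), the graded multiplicity space assembles so that tensoring with it and taking $S_n$-invariants is exactly the operation ``multiply the Frobenius characteristic by the graded regular-representation character and extract the coefficient of $h_1^n / $ apply the Hall inner product with $h_1^n$,'' which is the same as principal specialization up to the factor $\phi_n(u^2)$. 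The key identity to invoke is that for a graded $S_n$-rep with Frobenius characteristic $F(x; u)$, the graded dimension of $\bigl(V \otimes \Q_\ell[S_n]_{\mathrm{gr}}\bigr)^{S_n}$ equals $\phi_n(u^2) \cdot \sp_{u^2}(F)$, because $\langle F, \text{(graded reg. rep.)}\rangle$ unwinds via $\langle h_\lambda, p_\mu\rangle$-type identities into the principal specialization times $\prod(1-u^{2i})$. I would state and prove this as a standalone lemma on symmetric functions.

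Assembling these: $P_u(S_n(X)) = \sum_i \dim H^i(S_n(X))(-u)^i$ equals the graded dimension of $(H^*(X^n) \otimes H^*(\GL_n/\T_n))^{S_n}$, which by the two steps above is $\phi_n(u^2)\,\sp_{u^2}(\ch_u(X^n))$, and then \Cref{thm: main result} upgrades this to $P_u(C_n(X))$. The main obstacle I anticipate is not conceptual but bookkeeping: getting the sign conventions and the grading conventions (cohomological degree versus the $q$-degree in symmetric functions, the $(-u)^i$ in $P_u$ versus $u^{2i}$ from even cohomology, and the possibility of odd-degree classes in $H^*(X)$ making $\ch_u(X^n)$ a super-symmetric function) to line up so that the clean statement $P_u(C_n(X)) = \phi_n(u^2)\sp_{u^2}(\ch_u(X^n))$ is literally correct; I would handle this by first treating the case where $X$ has cohomology concentrated in even degrees (so everything is a genuine $\Z_{\ge 0}$-graded $S_n$-rep and $\phi_n(u^2)$ appears honestly), and then noting the odd case follows formally by the usual super-commutative sign rule, which is already baked into the definition of $\ch_u$ and $P_u$. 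A secondary point requiring care is justifying the Künneth-plus-invariants identity $H^*(X^n \times^{S_n} Y) = (H^*(X^n)\otimes H^*(Y))^{S_n}$ for the non-free quotient; since $S_n$ acts with finite stabilizers and we use $\Q_\ell$-coefficients, this is the standard transfer argument ($H^*(Z/G) = H^*(Z)^G$ rationally for finite $G$), applied to $Z = X^n \times \GL_n/\T_n$ with its diagonal $S_n$-action — I would cite this rather than reprove it.
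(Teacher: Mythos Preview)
Your proposal is correct and follows essentially the same route as the paper: reduce to $P_u(S_n(X))$ via \Cref{thm: main result}, identify $H^*(S_n(X))$ with $(H^*(X^n)\otimes H^*(\GL_n/\T_n))^{S_n}$ by the transfer argument, use the affine-bundle equivalence $\GL_n/\T_n\simeq \B_n$, and then invoke the graded-character formula for $\B_n$ (the paper's \Cref{thm: graded character of the flag variety}) together with the Hall inner product to obtain $\phi_n(u^2)\,\sp_{u^2}(\ch_u(X^n))$ (the paper's \Cref{lem: graded inner product with flag}). One small slip: $\GL_n/\T_n\to \GL_n/B_n$ is not a $\mathbb{G}_m^n$-torsor but has fiber $B_n/\T_n\cong \mathbb{A}^{\binom{n}{2}}$, as you yourself note a moment later; also the diagonal $S_n$-action on $X^n\times \GL_n/\T_n$ is in fact free (since $S_n$ acts freely on $\GL_n/\T_n$), so no appeal to finite stabilizers is needed.
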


Furthermore, we can package these polynomials into a generating series which admits a beautiful infinite product formula in terms of the Betti zeta function $\zeta_X^\mathrm{B}(t) = \sum_{n=0}^\infty P_u(\Sym^n X)t^n$, which itself admits a product factorization by a celebrated formula of Macdonald \cite{macdonald1962poincare}, see \Cref{thm: Betti analog weil conjecture Macdonald formula}.

\begin{theorem}
\label{thm: generating series Betti numbers}
The generating series of the Poincar\'e series of $\Coh_n(X)$ is related to the Betti zeta function by an infinite product identity:
  \[
    1 + \sum_{n = 1}^\infty P_u(\Coh_n(X)) t^n = \prod_{i = 0}^\infty \zeta_X^\mathrm{B}(u^{2i}t).
  \]
\end{theorem}


\subsection{A Hermitian Version}
Finally, we establish a variant of our main result in the Hermitian setting when $X$ is defined over $\R$ (\Cref{sec: a hermitian variant}). Let $HC_n(X)$ be the space of commuting Hermitian matrices satisfying the equations of $X$, and $HS_n(X) = X(\R)^n \times^{S_n} \U_n/\UT_n$.

\begin{theorem}
\label{thm: hermitian version}
The natural map $\sigma_H \colon HS_n(X) \to HC_n(X)$ induces an isomorphism on rational cohomology.
\end{theorem}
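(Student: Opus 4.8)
Let me think about how to adapt the proof of the main result to the Hermitian/compact setting.

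The plan is to mirror the strategy used for Theorem \ref{thm: main result}, replacing algebraic geometry and $\ell$-adic cohomology by the topology of real points and singular (rational) cohomology. The analogue of the support map $p_2\colon C_n(X)\to\Sym^n X$ in the Hermitian setting is the map $HC_n(X)\to \Sym^n X(\R)$ sending a tuple of commuting Hermitian matrices to the unordered multiset of its simultaneous real eigenvalues — this is well-defined precisely because commuting Hermitian matrices are simultaneously unitarily diagonalizable with real spectrum. Similarly $HS_n(X) = X(\R)^n\times^{S_n}\U_n/\UT_n$ maps to $\Sym^n X(\R)$. The key structural point, which replaces the Jordan-type degeneration analysis, is that over the locus where the $n$ eigenvalues are pairwise distinct both maps restrict to the same fibration: a fiber is the space of ordered orthogonal direct-sum decompositions $\C^n = \bigoplus L_i$, i.e.\ $\U_n/\UT_n$, quotiented appropriately. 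Because Hermitian matrices are always diagonalizable, the Bosonic/Fermionic distinction collapses even more dramatically than in the algebraic case, and one expects $\sigma_H$ to be something close to a homotopy equivalence rather than merely a cohomology isomorphism.

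The main steps I would carry out are as follows. First, set up the stratification of $\Sym^n X(\R)$ by partition type (which points collide), pulled back to both $HC_n(X)$ and $HS_n(X)$, and check that $\sigma_H$ is compatible with these stratifications. Second, analyze a single stratum: over the open stratum $\sigma_H$ is an isomorphism on the nose (both sides are the configuration-space bundle with fiber the flag-type manifold $\U_n/\UT_n$), and over a deeper stratum indexed by a partition $\lambda\vdash n$ one must compare the fiber of $HC_n$ — tuples of commuting Hermitian matrices with a prescribed eigenvalue-collision pattern, which by the spectral theorem is a product of smaller Hermitian Grassmannian-type spaces $\U_n/(\U_{\lambda_1}\times\cdots\times\U_{\lambda_\ell})$ — with the corresponding fiber of $HS_n$. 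Third, use a Leray/Mayer--Vietoris or a spectral-sequence comparison over the stratification (the Hermitian analogue of the long exact sequences and ULA arguments in the $\ell$-adic proof), feeding in that $\sigma_H$ induces isomorphisms stratum by stratum, to conclude the global isomorphism on $H^*(-;\Q)$. Fourth, if one wants, repackage via the $\U_n$-equivariant homotopy type: $HC_n(X)$ deformation retracts onto a space built from $X(\R)^n$ and compact unitary flag manifolds, and the same holds for $HS_n(X)$, making the comparison transparent.

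The hard part, as in the algebraic case, will be the local analysis at the deepest strata — understanding the fiber of $HC_n(X)$ over a point of $\Sym^n X(\R)$ with high multiplicity, where several Hermitian matrices must simultaneously have a shared generalized (here genuine, since Hermitian) eigenspace on which they act as commuting Hermitian operators constrained by the equations of $X$ at that point. In the algebraic proof this is where the hypothesis on $X$ or $\operatorname{char} k$ enters (via \Cref{conjecture artin local ring} and the grading assumption); in the real case, since $X$ is a variety over $\R$ and we only see $X(\R)$, one should check that the relevant local model — commuting Hermitian matrices annihilated by the ideal of $X$ at an $\R$-point — still deformation retracts onto the "reduced" locus, which is where the grading/characteristic-zero hypothesis does its work. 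I expect this reduction to go through by the same graded-module degeneration argument (scaling the grading gives a $\mathbb{G}_m$-action, whose real/compact form gives an $\R_{>0}$-action providing the deformation retraction), but verifying that it interacts correctly with the Hermitian constraint — i.e.\ that the retraction can be taken through Hermitian matrices — is the step requiring genuine care. Once that local statement is in hand, the patching over strata is formal, and combined with Step one and Step two it yields the claimed cohomology isomorphism.
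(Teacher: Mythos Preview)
Your overall strategy --- analyze $\sigma_H$ fiberwise over $\Sym^n X(\R)$ --- is exactly the paper's, and your identification of the fiber of $HC_n(X)$ over a point of type $\lambda$ as $\U_n/\prod_i \U_{\lambda_i}$ is correct.  However, you are making the argument harder than it is, in two places.

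First, your final paragraph is a red herring.  There is no ``hard local part'' in the Hermitian setting.  You already observed that commuting Hermitian matrices are simultaneously unitarily diagonalizable; the consequence is that the analogue of $\mathrm{Nil}_{\lambda_i,x_i}$ is literally a single point --- a Hermitian matrix with only one eigenvalue is a scalar.  So the fiber $p_2^{-1}(D_\lambda)$ is \emph{equal} to its semisimple locus $\U_n/\prod_i \U_{\lambda_i}$, and no contractibility argument, grading hypothesis, or analogue of \Cref{conjecture artin local ring} is needed.  What remains for the fiberwise comparison is to identify the fiber of $HS_n(X)$ as $(\U_n/\UT_n)/S_\lambda$ and to note that the restriction of $\sigma_H$ is a fibration with fiber $\prod_i \U_{\lambda_i}/N(\UT_{\lambda_i})$, which has trivial rational cohomology; you should make this step explicit.

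Second, you propose to globalize via a stratification and Mayer--Vietoris/spectral-sequence patching.  This can be made to work, but it is more laborious than necessary.  Because $\U_n$ is compact, the maps $p_1$, $p_2$, and $\sigma_H$ are all \emph{proper}; so proper base change applies on the nose, and the fiberwise cohomology isomorphism immediately upgrades to an isomorphism $\sigma_H^*\colon Rp_{2*}\uQ \isoto Rp_{1*}\uQ$.  This is the route the paper takes, and it replaces both the delicate AP/base-change machinery used in the algebraic proof and the stratification bookkeeping you outline.  In short: the Hermitian case is genuinely \emph{easier} than the algebraic one, not merely analogous to it.
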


\begin{example}
  We have $\mathrm{HC}_n(\A^N)$ being the real variety of commuting $N$ tuples of Hermitian matrices, and, less obviously but not hard to check, that $\mathrm{HC}_n((S^1)^N)$ is the real variety of commuting $N$ tuples in $U_n$, where $S^1=\Spec \R[x,y]/(x^2+y^2-1)$. Our results thus give the Poincar\'e polynomials of these real varieties, in line with the results of \cite{ramras2019hilbert} in type $A$.
\end{example}

\subsection{Context and Related Works}
The map $\sigma \colon S_n(X) \to C_n(X)$ is $\GL_n$-equivariant, hence it induces a map between $\GL_n$-equivariant cohomologies. Applying our main result to the spectral sequence computing the equivariant cohomology, see that the equivariant cohomologies of $S_n(X)$ and $C_n(X)$ are also identified.
  \[
  \begin{tikzcd}
    E_2^{pq} \colon H^q(B\GL_n, H^p(C_n(X))) \ar[r, Rightarrow] \ar[d, "\sigma^*", "\iso"'] & H^{p + q}_{\GL_n}(C_n(X)) \ar[d, "\sigma^*", "\iso"'] \\
    E_2^{pq} \colon H^q(B\GL_n, H^p(S_n(X))) \ar[r, Rightarrow] & H^{p + q}_{\GL_n}(S_n(X)).
  \end{tikzcd}
  \]
Recall that the equivariant cohomology of a space computes the cohomology of the quotient stack, our result says the rational cohomologies of the stack $\Coh_n(X) = [C_n(X)/\GL_n]$ and the stack $\Sym^n(X \times B\Gm) = [S_n(X)/\GL_n]$ are identified. This is classically known when $X$ is a smooth curve (see, e.g., Heinloth \cite{heinloth2012cohomology}).

Our result also implies equivariant formality for the $\GL_n$ action on $C_n(X)$ (\Cref{equivariant formality}), that is, the spectral sequence for the equivariant cohomology collapses at the second page. Consequently we obtain the formula for the Poincar\'e polynomial of the stack $\Coh_n(X)$:
\begin{equation}
\label{formula of cohomology of Cohn}
  P_u(\Coh_n(X)) = P_u(B\GL_n) P_u(C_n(X)) = \sp_{u^2}(\ch_u(X^n)).
\end{equation}


Matrix points on more general varieties have been explored by earlier works of Chien-Hao Liu and Shing-Tung Yau in the context of D-branes in string theory \cite{liu2010d}. The moduli space $C_n(X)$ (as representation space $\mathsf{rep}_n(R)$) is also seen in the works of Lieven Le Bruyn and Jens Hemelaer \cite{le2012representation,hemelaer2016azumaya}. However, to the authors' knowledge, the rational cohomology of the space $C_n(X)$ has not been computed before. 
In fact, even the simple problem of determining the Betti numbers of the space of $n$-by-$n$ matrices avoiding several eigenvalues was already difficult to solve using classical methods, but our combinatorial formula (\Cref{thm: combinatorial formula}) applied to the space $\A^1 - \{a_1, a_2, \dots, a_r\}$ provides a low complexity way to compute these Betti numbers.

\subsection{Acknowledgements}
We are grateful to Dima Arinkin, Jordan Ellenberg, Laurentiu Maxim, Joshua Mundinger, and Yiyu Wang for helpful discussions. We are especially indebted to Tasuki Kinjo for providing the proof idea of base change for $C_n(X) \to \Sym^n X$.

We acknowledge the role of Google's Gemini AI in making us aware of the GIT result by Neeman \cite{neeman1985topology} used in \Cref{contractibility of nil}, and general polish of language in this paper.

Asvin was supported by a grant from the Simons Foundation International [MPS-SIM-00001691, JT].

\subsection{Outline}
In \Cref{sec: the proof}, we analyze the geometry of the map $\sigma$ relatively over $\Sym^n X$ and prove \Cref{thm: main result}. \Cref{sec: combinatorics} develops the necessary combinatorial tools involving graded characters and symmetric functions, leading to the proofs of \Cref{thm: combinatorial formula} and \Cref{thm: generating series Betti numbers}. \Cref{sec: a hermitian variant} discusses the Hermitian variant and proves \Cref{thm: hermitian version}.

\section{The proof}
\label{sec: the proof}
In this section, we prove our main theorem by analyzing the map $\sigma \colon S_n(X) \to C_n(X)$ relatively over the symmetric product $\Sym^n X$. We assume that we are in the situation where either $k$ is of characteristic zero, or that any local ring $R$ at a closed point of $X$ is non-negatively graded. 

\subsection{The Relative Picture}
Let $p_1 \colon S_n(X) \to \Sym^n X$ and $p_2 \colon C_n(X) \to \Sym^n X$ denote the respective natural morphisms. The map $\sigma$ commutes with these projections. We aim to show that $\sigma$ induces an isomorphism on the derived direct images of the constant sheaf.

\begin{theorem}
\label{thm: fiberwise isomorphism}
  Under the assumptions of Theorem \ref{thm: main result}, the map $\sigma$ induces an isomorphism $\sigma^* \colon Rp_{2*} \uQ_\ell \isoto Rp_{1*} \uQ_\ell$ in the derived category $D_c^b(\Sym^n X)$.
\end{theorem}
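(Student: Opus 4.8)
The plan is to prove that $\sigma^*$ is an isomorphism stalkwise. Since $p_1,p_2$ are of finite type, $Rp_{1*}\uQ_\ell$ and $Rp_{2*}\uQ_\ell$ lie in $D_c^b(\Sym^n X)$ (the finiteness theorem for constructible sheaves; note that proper base change is unavailable, as neither map is proper), so $\sigma^*$ is an isomorphism as soon as it induces one on every stalk — and, a nonzero constructible complex having a nonzero stalk at some closed point, it suffices to work at geometric points of $\Sym^n X$ over closed points. Fix such a point $\bar s$, corresponding to an effective $0$-cycle $\sum_{i=1}^{r} n_i[x_i]$ with the $x_i$ pairwise distinct closed points.

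\smallskip
\noindent\textbf{Step 1: reduction to $r=1$.} Choose pairwise disjoint affine open neighborhoods $U_i\ni x_i$ in $X$; then $\prod_i\Sym^{n_i}U_i$ is an open neighborhood of $\bar s$ in $\Sym^n X$. Over it both spaces split: a framed length-$n$ sheaf supported in $\coprod_i U_i$ with length $n_i$ over $U_i$ decomposes canonically as $\bigoplus_i M_i$, and once each $\Gamma(U_i,M_i)$ is framed there remains exactly a $\GL_n$-worth of choice of a compatible framing of $\bigoplus_i\Gamma(U_i,M_i)$, modulo $\prod_i\GL_{n_i}$. This gives, compatibly with $\sigma$,
\[
  C_n(X)\big|_{\prod_i\Sym^{n_i}U_i}\cong\GL_n\times^{\prod_i\GL_{n_i}}\prod_i C_{n_i}(U_i),\qquad S_n(X)\big|_{\prod_i\Sym^{n_i}U_i}\cong\GL_n\times^{\prod_i\GL_{n_i}}\prod_i S_{n_i}(U_i).
\]
Because the partial flag variety $\GL_n/\prod_i\GL_{n_i}$ is proper with free cohomology concentrated in even degrees and $\prod_i\GL_{n_i}$ is connected, forming the twisted product $\GL_n\times^{\prod_i\GL_{n_i}}(-)$ over $\prod_i\Sym^{n_i}U_i$ tensors the stalks of the derived pushforwards with $H^*(\GL_n/\prod_i\GL_{n_i},\Q_\ell)$ (the relevant Leray spectral sequence degenerates); together with the Künneth formula this identifies the stalk of $Rp_{j*}\uQ_\ell$ at $\bar s$ with $H^*(\GL_n/\prod_i\GL_{n_i},\Q_\ell)\otimes\bigotimes_i\big(\text{stalk at }n_i[x_i]\text{ of the corresponding complex for }U_i,\,n_i\big)$, functorially in $\sigma$. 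Thus it is enough to compute the stalk at a single point $n[x]$, and there we may replace $X$ by any affine scheme étale over a neighborhood of $x$.

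\smallskip
\noindent\textbf{Step 2: the homogeneous case.} Assume $R=\mathcal O_{X,x}$ (or an étale model of it) carries a connected non-negative grading, so that $\Spec R$ is a cone with a $\Gm$-action contracting to its vertex $x$. Scaling the matrix entries of a degree-$d$ generator by weight $d$ makes $C_n(\Spec R)$ and $S_n(\Spec R)$ affine schemes with non-negatively graded coordinate rings, hence with contracting $\Gm$-actions, and the projections to the (conical) scheme $\Sym^n\Spec R$, with vertex $n[x]$, become $\Gm$-equivariant. By the contraction lemma — for a $\Gm$-monodromic complex on an affine scheme with contracting $\Gm$-action, global sections compute the stalk at the fixed locus and coincide with its $R\Gamma$; for the constant sheaf this is $\A^1$-homotopy invariance — applied once on the base and once on the total spaces,
\[
  (Rp_{2*}\uQ_\ell)_{n[x]}\cong R\Gamma\big(C_n(\Spec R)^{\Gm},\uQ_\ell\big),\qquad (Rp_{1*}\uQ_\ell)_{n[x]}\cong R\Gamma\big(S_n(\Spec R)^{\Gm},\uQ_\ell\big).
\]
Here $C_n(\Spec R)^{\Gm}$ is the single point $R\twoheadrightarrow k\hookrightarrow\Mn(k)$, so its cohomology is $\Q_\ell$ in degree $0$; and $S_n(\Spec R)^{\Gm}=\{x\}^n\times^{S_n}\GL_n/\T_n=\GL_n/N(\T_n)$, whose rational cohomology is $H^*(\GL_n/\T_n,\Q_\ell)^{S_n}$, the $S_n$-invariants of the coinvariant algebra (Borel presentation); since the coinvariant algebra is the regular representation (cf. \Cref{thm: graded character of the flag variety}), this is again $\Q_\ell$ in degree $0$. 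As $\sigma$ is a morphism between nonempty connected schemes, the induced map $\Q_\ell\to\Q_\ell$ on these stalks is an isomorphism, proving the theorem when $X$ is locally homogeneous.

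\smallskip
\noindent\textbf{Step 3: characteristic zero.} When $R$ is not presented by a grading we degenerate it to its tangent cone: the Rees algebra of $(R,\mathfrak m_x)$ yields a $\Gm$-equivariant flat family over $\A^1$ with general fiber $\Spec R$ and special fiber the graded ring $\Spec\gr_{\mathfrak m_x}R$. Building $C_n$, $S_n$ and $\Sym^n$ relatively over $\A^1$ and comparing the stalks at $n[x]$ in the two fibers via the specialization map, the statement reduces to the graded case of Step 2 — provided that specialization map is an isomorphism, which is where characteristic zero enters. I expect this to be the principal obstacle: controlling the $\ell$-adic cohomology of $C_n$ and $S_n$ across the degeneration to the tangent cone without the crutch of properness of $p_1,p_2$ (in positive characteristic one instead runs the comparison through a Jordan--H\"older stratification, cf. \Cref{JH is in AP}). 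A secondary point to verify carefully is the contraction lemma in this non-proper setting — that $Rp_{j*}\uQ_\ell$ is genuinely $\Gm$-monodromic and that the actions on $C_n(\Spec R)$, $S_n(\Spec R)$, and $\Sym^n\Spec R$ are contracting.
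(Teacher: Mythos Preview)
Your approach differs substantially from the paper's, and the two halves of your argument meet different fates.

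\textbf{The locally homogeneous case (Steps 1--2).} This is essentially correct and in some ways more economical than the paper. The paper never reduces to $r=1$; instead it identifies the full fiber $p_2^{-1}(D_\lambda)$ as an associated bundle $\GL_n\times^{\prod_i\GL_{\lambda_i}}\prod_i\mathrm{Nil}_{\lambda_i,x_i}$ and then spends real effort on a base-change theorem identifying the stalk of $Rp_{2*}\uQ_\ell$ with the cohomology of this fiber. Since $p_2$ is not proper, the paper factors through the stack $\Coh_n(X)$ and invokes Kinjo's framework of \emph{approximately proper} morphisms (\Cref{JH is in AP}, \Cref{base change p2}) to obtain base change for $\mathsf{JH}\colon\Coh_n(X)\to\Sym^nX$; this holds in every characteristic. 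Your $\Gm$-contraction argument sidesteps that machinery entirely in the graded case, which is a genuine simplification. One small remark: you do not actually need the Leray spectral sequence in Step~1 to degenerate --- since you are only comparing two stalks via $\sigma^*$, it suffices that $\sigma$ induce an isomorphism on the $E_2$-pages, after which the comparison theorem for spectral sequences finishes.

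\textbf{Characteristic zero (Step 3).} Here there is a genuine gap, and your degeneration-to-tangent-cone sketch is not how the paper proceeds. The paper's architecture is the reverse of what you suggest: the AP base-change step is characteristic-free and is used in \emph{both} situations to reduce to the cohomology of the fiber. The dichotomy between ``graded'' and ``characteristic zero'' enters only at the final step, the contractibility of the local pieces $\mathrm{Nil}_{\lambda_i,x_i}=C_{\lambda_i}(R_i/\mathfrak m_i^{\lambda_i})$ (\Cref{contractibility of nil}). In the graded case this is a one-line $\Gm$-contraction; in characteristic zero it is a GIT argument: the $\GL_n$-action on $C_n(R)$ for $R$ Artinian local has a unique closed orbit (the trivial module), and Neeman's theorem furnishes an equivariant deformation retraction of the complex points onto it. Your parenthetical that ``in positive characteristic one instead runs the comparison through a Jordan--H\"older stratification'' therefore inverts the logic --- the JH/AP step is the uniform engine, not a positive-characteristic patch, and the missing characteristic-zero ingredient is Neeman's retraction rather than a specialization argument.
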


The heart of our proof is showing that the morphisms $p_1$ and $p_2$ has base change property with respect to the constant sheaf, which allows us to compare stalks easily. 
Theorem \ref{thm: main result} follows immediately from the sheaf theoretic statement by taking cohomology. We spell out the proof for the rest of the section.

\subsection{Cohomology of the Fibers}

Let $z_\lambda$ be a geometric point on $\Sym^n X$ corresponding to a zero-cycle $\lambda_1 [x_1] + \cdots + \lambda_m [x_m]$, where $\lambda = (\lambda_1, \dots, \lambda_m)$ is a partition of $n$. Let $S_\lambda = S_{\lambda_1} \times \cdots \times S_{\lambda_m} \subset S_n$ be the corresponding Young subgroup.

\begin{lemma}
  The underlying variety of the fiber $p_1^{-1}(z_\lambda)$ of $S_n(X) \to \Sym^n X$ is isomorphic to the quotient $(\GL_n/\T_n)/S_\lambda$.
\end{lemma}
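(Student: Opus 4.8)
The plan is to reduce this to the elementary identity $(G/H \times Y)/G \cong Y/H$, valid for any finite group $G$, subgroup $H$, and $G$-variety $Y$ on which the relevant quotients exist, applied with $G = S_n$, $H = S_\lambda$, and $Y = \GL_n/\T_n$. All quotients in sight exist as quasi-projective schemes because $\GL_n/\T_n$ is quasi-projective (it fibers over the flag variety with affine-space fibers) and $S_\lambda$ is finite.

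First I would pin down the fiber of $q \colon X^n \to \Sym^n X$ over $D_\lambda$. Fix the lift $\xi = (x_1,\dots,x_1,\dots,x_m,\dots,x_m) \in X^n$ with each $x_i$ repeated $\lambda_i$ times. Since the geometric points $x_1,\dots,x_m$ are pairwise distinct, $\mathrm{Stab}_{S_n}(\xi)$ is precisely the Young subgroup $S_\lambda$, so the reduced fiber $q^{-1}(D_\lambda)$ is the orbit $S_n\cdot\xi$, which as an $S_n$-set is $S_n/S_\lambda$. Because $S_n(X) = (X^n \times \GL_n/\T_n)/S_n$ and $p_1$ is induced by $q\circ\mathrm{pr}_1$, the fiber $p_1^{-1}(D_\lambda)$ is the image of $q^{-1}(D_\lambda)\times\GL_n/\T_n$ under the quotient map $X^n\times\GL_n/\T_n \to S_n(X)$; on reduced structures this is $(S_n/S_\lambda \times \GL_n/\T_n)/S_n$.

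Then I would invoke the identity. One clean route: $G/H \times Y$ is the induced space $G\times^H Y = (G\times Y)/H$, via $(g,y)\mapsto(gH,gy)$ with $H$ acting on $G\times Y$ by $h\cdot(g,y)=(gh^{-1},hy)$; and $G\times Y$ carries a second, commuting action of $G$ by left translation on the first factor. Quotienting by these two commuting actions in either order identifies $(G/H\times Y)/G$ with $((G\times Y)/G)/H = Y/H$. Unwound, this is just the orbit--stabilizer remark that the slice $\{\xi\}\times\GL_n/\T_n \hookrightarrow X^n\times\GL_n/\T_n \to S_n(X)$ surjects onto $p_1^{-1}(D_\lambda)$ and glues points exactly along $S_\lambda$-orbits. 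Either way, $p_1^{-1}(D_\lambda) \cong (\GL_n/\T_n)/S_\lambda$.

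The one point needing care is the difference between the scheme-theoretic fiber of $q$ and its underlying orbit: $q^{-1}(D_\lambda)$ is non-reduced whenever some $\lambda_i>1$ (already for $\A^2\to\Sym^2\A^1$ over the diagonal), so the identification is literally valid only after passing to reductions. This is harmless here, since the fiber enters our argument only through its $\ell$-adic cohomology and $\ell$-adic cohomology does not see nilpotents; alternatively one checks the universal property of the geometric quotient directly on the slice $\{\xi\}\times\GL_n/\T_n$. Everything else is formal.
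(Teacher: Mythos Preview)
Your argument is correct. The paper itself states this lemma without proof, evidently regarding it as immediate from the definitions; your write-up is exactly the standard orbit--stabilizer computation one would supply, including the careful remark about non-reducedness of the scheme-theoretic fiber of $X^n \to \Sym^n X$ over the big diagonal (which, as you note, is irrelevant once we pass to $\ell$-adic cohomology or work with reduced structures).
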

\begin{proof}
  We may assume $k$ is algebraically closed. 
  Let $\hat{S}_n(X) = X^n \times \GL_n/\T_n$ where $S_n$ acts diagonally, let $ \pi \colon X^n \to \Sym^n X$ denote the quotient map. Let $\hat{p}_1 \colon \hat{S}_n(X) \to \Sym^n X$ be the composition of the projection $\hat{S}_n(X) \to X^n$ and the quotient $X^n \to \Sym^n X$.
  We have the following commutative diagram:
  \[
  \begin{tikzcd}
    \hat{S}_n(X) \ar[r] \ar[d, "\mu"] & X^n \ar[d, "\pi"] \\
    S_n(X) \ar[r, "p_1"] & \Sym^n X
  \end{tikzcd}
  \]
  Since $\mu$ is a finite \'etale Galois cover, the induced map between fibers $\hat{p}_1^{\,\,-1}(z_\lambda) \to p_1^{-1}(z_\lambda)$ is again a finite \'etale Galois cover. The following diagram is fibered, and vertical arrows are finite \'etale Galois morphisms with Galois group $S_n$,
  \[
  \begin{tikzcd}
    \hat{p}_1^{\,\,-1}(z_\lambda)_{red} \ar[r, hook] \ar[d] & \hat{p}_1^{\,\,-1}(z_\lambda) \ar[d] \\
    p_1^{-1}(z_\lambda)_{red} \ar[r, hook] & p_1^{-1}(z_\lambda)
  \end{tikzcd}
  \]
  Note that $\hat{p}_1^{\,\,-1}(z_\lambda)_{red} = \pi^{-1}(z_\lambda)_{red} \times \GL_n/\T_n$ equipped with a diagonal $S_n$ action. Since $S_n$ acts transitively on $\pi^{-1}(z_\lambda)_{red}$ with $S_\lambda$ being the stabilizer, we have $p_1^{-1}(z_\lambda)_{red} = (\GL_n/\T_n)/S_\lambda$.
\end{proof}

The description of the fiber of $C_n(X) \to \Sym^n X$ is more involved. Let $R_i$ be the local ring of $X$ at $x_i$ with maximal ideal $\mathfrak{m_i}$. Define $\mathrm{Nil}_{\lambda_i, x_i}$ to be the fiber above $\lambda_i[x_i] \in \Sym^{\lambda_i} X$ of the map $C_{\lambda_i}(X) \to \Sym^{\lambda_i} X$. The scheme $\mathrm{Nil}_{\lambda_i, x_i}$ parametrizes length $\lambda_i$ sheaves on $X$ supported at $x_i$ with a framing.

\begin{lemma}
  The fiber $p_2^{-1}(z_\lambda)$ is isomorphic to the associated bundle
  \[\GL_n \times^{\prod_i \GL_{\lambda_i}} \left(\prod_{i = 1}^m \mathrm{Nil}_{\lambda_i, x_i}\right).\]
\end{lemma}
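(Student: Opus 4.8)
The plan is to identify $p_2^{-1}(D_\lambda)$ with a moduli space of data attached to the points $x_1,\dots,x_m$ individually, and then repackage that data as an associated bundle over the homogeneous space $\GL_n/\prod_i\GL_{\lambda_i}$. The only ring-theoretic input is the elementary fact that a module of length $\ell$ over a Noetherian local ring is annihilated by the $\ell$-th power of the maximal ideal.

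First I would unwind what a geometric point of the fiber is: a framed finite sheaf $(M,\iota\colon\Gamma(X,M)\isoto k^n)$ whose support cycle is $D_\lambda=\lambda_1[x_1]+\cdots+\lambda_m[x_m]$. Since the $x_i$ are pairwise distinct closed points, the module $M$ has a canonical primary decomposition $M=\bigoplus_{i=1}^m M_i$ with $M_i$ the localization of $M$ at $x_i$, a module over the local ring $R_i$, and the condition on the support cycle says $\len_{R_i}M_i=\lambda_i$. By the fact above, $\mathfrak m_i^{\lambda_i}$ kills $M_i$, so $M_i$ is a length-$\lambda_i$ module over $R_i/\mathfrak m_i^{\lambda_i}$. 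Transporting along $\iota$, the datum $(M,\iota)$ is equivalent to an ordered direct-sum decomposition $k^n=\bigoplus_{i=1}^m V_i$ with $V_i:=\iota(\Gamma(X,M_i))$, $\dim V_i=\lambda_i$, together with a framed $R_i/\mathfrak m_i^{\lambda_i}$-module structure on each $V_i$ — that is, after trivializing $V_i\cong k^{\lambda_i}$, a point of $\mathrm{Nil}_{\lambda_i,x_i}$ (for the scheme structure one works with $C_{\lambda_i}(R_i/\mathfrak m_i^{\lambda_i})$ and passes to underlying reduced schemes at the end). Conversely these data reconstruct $(M,\iota)$.

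Next I would recognize this as an associated bundle. Let $L=\prod_{i=1}^m\GL_{\lambda_i}\subset\GL_n$ be the block-diagonal subgroup, which is the stabilizer of the standard ordered decomposition $k^n=\bigoplus_i k^{\lambda_i}$; then $\GL_n/L$ is exactly the variety of ordered direct-sum decompositions of $k^n$ of type $\lambda$. The group $\GL_n$ acts on $p_2^{-1}(D_\lambda)$ by postcomposition on the framing (which does not change the support), and $(M,\iota)\mapsto(V_\bullet)$ defines a $\GL_n$-equivariant morphism $\rho\colon p_2^{-1}(D_\lambda)\to\GL_n/L$. Its fiber over the base point $eL$ is $\prod_{i=1}^m\mathrm{Nil}_{\lambda_i,x_i}$, on which the residual $L=\prod_i\GL_{\lambda_i}$ acts factorwise through the framing-change action of each $\GL_{\lambda_i}$ on $\mathrm{Nil}_{\lambda_i,x_i}$. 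By the standard principle that a $\GL_n$-space carrying a $\GL_n$-equivariant map to a homogeneous space $\GL_n/L$ is canonically the associated bundle $\GL_n\times^L(\text{fiber over }eL)$ — the multiplication map $\GL_n\times\rho^{-1}(eL)\to p_2^{-1}(D_\lambda)$ descending to the isomorphism — we conclude
\[
  p_2^{-1}(D_\lambda)\;\cong\;\GL_n\times^{\prod_i\GL_{\lambda_i}}\Big(\prod_{i=1}^m\mathrm{Nil}_{\lambda_i,x_i}\Big).
\]

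All of this is routine at the level of geometric points; the hard part will be to run it at the level of schemes/functors. The essential point is that the primary decomposition $M=\bigoplus_iM_i$, the annihilation by $\mathfrak m_i^{\lambda_i}$, and hence the morphism $\rho$, must be produced in families over an arbitrary test algebra $S$. This works because the $x_i$ are disjoint: the image of $R\to\Mn(S)$ is a finite $S$-algebra whose geometric fibers are the Artinian rings $\prod_i R_i/\mathfrak m_i^{\lambda_i}$, so it splits étale-locally on $S$ as a product, yielding orthogonal idempotents $\epsilon_i\in\Mn(S)$ that cut out the $V_i$, define $\rho$, and give the decomposition; and flatness of $\Gamma(M_i)$ over $S$ forces the fiberwise length to remain $\lambda_i$. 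Once the identification of $S$-points is made functorial in this way it is an isomorphism of schemes, and the remaining associated-bundle formalism is classical. I expect this family-level verification — rather than any single conceptual step — to be where essentially all of the work lies.
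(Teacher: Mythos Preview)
Your proposal is correct and follows essentially the same approach as the paper: decompose the framed sheaf along its support into local pieces $M_i$, transport via $\iota$ to an ordered decomposition $k^n=\bigoplus V_i$, and recognize the result as an associated bundle for $L=\prod_i\GL_{\lambda_i}$ acting on $\prod_i\mathrm{Nil}_{\lambda_i,x_i}$. Your write-up is in fact more careful than the paper's, which works only at the level of points and does not discuss the family-level splitting or the annihilation by $\mathfrak m_i^{\lambda_i}$ that you spell out.
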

\begin{proof}
  A point in $p_2^{-1}(z_\lambda)$ corresponds to a framed sheaf $(M, \iota)$ whose support is given by $z_\lambda$. Such a sheaf decomposes as $M = \bigoplus M_i$, where $M_i$ is supported at $x_i$ and has length $\lambda_i$. The framing $\iota$ induces a decomposition $k^n = \bigoplus V_i$ where $V_i = \Gamma(M_i)$. The space of such modules $M_i$ with a fixed basis of $V_i$ is parametrized by $\mathrm{Nil}_{\lambda_i, x_i}$. The group $G_\lambda = \prod_i \GL(V_i) \iso \prod_i \GL_{\lambda_i}$ acts on the choice of basis of $V_i$ inside $k^n$. The fiber is thus the quotient of $\GL_n \times \prod_i \mathrm{Nil}_{\lambda_i, x_i}$ by the action of $G_\lambda$, where $G_\lambda$ acts on $\GL_n$ by right multiplication (via the standard inclusion $G_\lambda \subset \GL_n$) and on $\prod \mathrm{Nil}_{\lambda_i, x_i}$ diagonally.
\end{proof}

\begin{conjecture}
\label{conjecture artin local ring}
  If $R$ is an Artinian local algebra over $k$, the underlying variety of $C_n(R)$ has trivial $\ell$-adic cohomology.
\end{conjecture}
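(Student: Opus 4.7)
The plan is to reduce the general Artinian case to the graded case — already covered by the locally homogeneous branch of \Cref{thm: main result} — via deformation to the associated graded. Writing $R = k[x_1, \dots, x_r]/I$ with $x_i \in \mathfrak{m}$, form the Rees algebra
\[
\tilde{R} = k[t, x_1, \dots, x_r]/\tilde{I},
\]
where $\tilde{I}$ is generated by the $t$-homogenizations $\tilde{f}(t,x) := \sum_{d \geq d_0(f)} t^{d - d_0(f)} f_d(x)$ for each relation $f = \sum_d f_d \in I$, with $d_0(f)$ the $\mathfrak{m}$-adic order. Then $\tilde{R}$ is flat over $k[t]$ with $\tilde{R}/(t-1) \cong R$ and $\tilde{R}/(t) \cong \gr_\mathfrak{m}(R)$, and $C_n$ applied to this family produces $\pi \colon \mathcal{C} := C_n(\tilde{R}) \to \A^1_t$ with general fiber $C_n(R)$ and special fiber $C_n(\gr_\mathfrak{m} R)$.

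If positive integer weights $w(t), w(x_i) > 0$ can be chosen so that each $\tilde{f}$ is $\Gm$-homogeneous — a linear constraint that admits positive solutions for a suitable presentation — then $\mathcal{C}$ carries a $\Gm$-action covering scaling on $\A^1_t$ that contracts onto the unique $\Gm$-fixed point, namely the trivial representation $R \twoheadrightarrow k \hookrightarrow \Mn(k)$. By the Drinfeld--Gaitsgory contraction principle, $H^*(\mathcal{C}, \uQ_\ell) \cong \uQ_\ell$ concentrated in degree $0$. The special fiber $\mathcal{C}_0 = C_n(\gr_\mathfrak{m} R)$ is acyclic by the locally homogeneous case, and the $\Gm$-action trivializes $\mathcal{C}|_{\Gm} \cong C_n(R) \times \Gm$. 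The excision sequence for the open-closed decomposition, combined with a Gysin-type identification $H^i_{\mathcal{C}_0}(\mathcal{C}) \cong H^{i-2}(\mathcal{C}_0)(-1)$ and the K\"unneth formula $H^*(\mathcal{C}|_{\Gm}) \cong H^*(C_n(R)) \otimes H^*(\Gm)$, then forces $H^*(C_n(R), \uQ_\ell) \cong \uQ_\ell$ in degree $0$ by a direct dimension count.

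The main obstacle is the Gysin-type identification, which requires the embedding $\mathcal{C}_0 \hookrightarrow \mathcal{C}$ to be regular in a sufficiently strong sense — this generally fails because $\mathcal{C}$ is typically singular. A potentially cleaner alternative is to prove universal local acyclicity of $\pi$ along $\A^1_t$, perhaps by leveraging the $\Gm$-equivariance as essential input, which would yield $R\Psi_\pi \uQ_\ell \cong \uQ_\ell|_{\mathcal{C}_0}$ and transfer the acyclicity of the special fiber to the general fiber. Yet another route is to extend Kinjo's approximately proper framework to show that $\pi$ itself lies in AP, enabling base change at closed points of $\A^1_t$. A secondary obstacle is ensuring the existence of positive $\Gm$-weights for an appropriate presentation, which may require an inductive argument on the nilpotency length of $\mathfrak{m}$ or a structural reduction to a ``good'' minimal presentation. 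Either path requires technical input beyond what the main theorem uses, which is likely why the characteristic-zero or locally homogeneous hypothesis remains essential.
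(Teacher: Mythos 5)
First, note the status of this statement in the paper: it is an open \emph{conjecture}, not a theorem. The paper proves it only under additional hypotheses (\Cref{contractibility of nil}): in characteristic zero via Neeman's equivariant deformation retraction onto the unique closed $\GL_n$-orbit, and for non-negatively graded $R$ via a direct $\Gm$-contraction of $C_n(R)$ to the trivial representation. Your proposal aims at the general statement by degenerating $R$ to $\gr_{\mathfrak m}(R)$ over $\A^1_t$, which is a genuinely different strategy from both of the paper's arguments; but as you yourself acknowledge, it is a programme rather than a proof, and the unproven steps are exactly where the difficulty of the conjecture lives.

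Two gaps are fatal as written. First, the positive-weight $\Gm$-action on the Rees family does not exist in general. The standard grading making $\tilde f = \sum_d t^{d-d_0} f_d$ homogeneous assigns $w(x_i)=1$ and $w(t)=-1$; demanding $w(t)>0$ as well is a strong extra condition, closely tied to gradability of the situation, and general Artinian local rings in positive characteristic are precisely the ones that are not gradable --- this is why the conjecture is open. Without all-positive weights there is no contraction of $\mathcal C$ to a fixed point, so the Drinfeld--Gaitsgory step and the claim $H^*(\mathcal C)\cong \Q_\ell$ have no foundation. Second, even granting the contraction, transferring acyclicity from the special fiber to the general fiber requires base change (equivalently, local acyclicity of $\pi$ along $\A^1_t$, or your Gysin identification). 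The map $\pi$ is not proper and $\mathcal C$ is singular, so neither proper base change nor cohomological purity applies; the paper's approximately-proper machinery is established only for $\mathsf{JH}\colon \Coh_n(X)\to\Sym^n X$, not for degenerations over $\A^1_t$, and extending it (or proving ULA) is new technical content that is not supplied. Your final "dimension count" is formally consistent once these inputs are assumed, but both inputs are assumed, not proved. A further cautionary point: since the characteristic-zero case is already covered by Neeman's theorem for arbitrary Artinian local $R$, your reduction would only be needed in positive characteristic --- exactly where the Rees degeneration and the weight problem are least tractable.
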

This conjecture holds under the assumptions of Theorem \ref{thm: main result}.

\begin{theorem}
\label{contractibility of nil}
  Let $R$ be an Artinian local algebra over $k$. The underlying variety of $C_n(R)$ has trivial $\ell$-adic cohomology if either $k$ has characteristic zero or $R$ is non-negatively graded.
\end{theorem}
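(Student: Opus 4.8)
The goal is to show that the underlying variety of $C_n(R)$ has trivial $\ell$-adic cohomology when $R$ is an Artinian local $k$-algebra, under one of the two hypotheses. The natural idea is to produce a contracting $\Gm$-action (or at least a $\Gm$-action whose fixed locus is a single point), since an $\A^1$-contraction of a variety onto a point forces the cohomology to be trivial; more precisely, if there is a $\Gm$-action on an affine variety $Y$ such that $\lim_{t \to 0} t \cdot y$ exists for all $y$ and the fixed locus is a single (reduced) point, then $Y$ is "$\A^1$-contractible" in the sense needed and $H^*_{\et}(Y, \Q_\ell) = \Q_\ell$ concentrated in degree $0$. I would first reduce to finding such an action.

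In the non-negatively graded case this is straightforward: write $R = \bigoplus_{d \ge 0} R_d$ with $R_0 = k$ (since $R$ is local Artinian, the degree-zero part is the residue field, which we may take to be $k$ after the usual harmless extension, or argue geometrically). The grading is the same as a $\Gm$-action on $\Spec R$ fixing the closed point. Functorially, this induces a $\Gm$-action on $C_n(R)$: a $k$-algebra map $R \to \Mn(S)$ is scaled by precomposition with the grading automorphism $t \cdot (-)$ on $R$. Concretely, if $R$ is generated by $x_1, \dots, x_r$ in degrees $d_1, \dots, d_r > 0$, then $t$ acts on the tuple of matrices $(X_1, \dots, X_r)$ by $(t^{d_1} X_1, \dots, t^{d_r} X_r)$. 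As $t \to 0$ every such tuple limits to the zero tuple, which corresponds to the unique framed module $R/\mathfrak{m} \otimes k^n = k^n$ with all $x_i$ acting by $0$ — wait, one must check this point actually lies in $C_n(R)$, i.e. that the all-zero tuple satisfies the defining relations; it does, because the relations are homogeneous of positive degree in the generators (as the grading is non-negative and $R_0 = k$), so they have no constant term. Hence the limit exists and the fixed point is unique, giving the contraction and the vanishing.

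The characteristic-zero case is the substantive one and I expect it to be the main obstacle, since $R$ need not be graded. The standard tool is the associated graded ring $\gr R = \bigoplus \mathfrak{m}^d/\mathfrak{m}^{d+1}$, which IS non-negatively graded; by the previous paragraph $C_n(\gr R)$ has trivial cohomology. One wants to deform $C_n(R)$ to $C_n(\gr R)$. The classical mechanism is the Rees construction: form the Rees algebra $\mathcal{R} = \bigoplus_{d} \mathfrak{m}^d \lambda^{-d} \subset R[\lambda, \lambda^{-1}]$ (or its completion), which is flat over $k[\lambda]$ with generic fiber $R$ and special fiber (at $\lambda = 0$) equal to $\gr R$. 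This yields a flat family $C_n(\mathcal{R}) \to \A^1_\lambda$ with general fiber $C_n(R)$ and special fiber $C_n(\gr R)$, together with a $\Gm$-action on the total space covering the scaling action on $\A^1_\lambda$. The cohomology of the total space retracts onto that of the special fiber $C_n(\gr R)$ (which is trivial), and one needs to transfer this to the general fiber. In characteristic zero one can invoke, for instance, the fact that for a $\Gm$-equivariant family over $\A^1$ the cohomology of any fiber is computed by the nearby/specialization functor and — because everything is $\Gm$-equivariant and the family is affine — the specialization map $H^*(C_n(R)) \to H^*(C_n(\gr R))$ is an isomorphism. The reason characteristic zero enters is exactly here: this kind of "constancy of cohomology in a $\Gm$-equivariant affine family with contracting special fiber" argument relies on resolution of singularities / generic smoothness type inputs or on Hodge-theoretic arguments that are unavailable in characteristic $p$; alternatively one can argue via the weight filtration and the fact that the whole family carries compatible $\Gm$-weights. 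I would present the Rees degeneration explicitly, reduce the claim to the specialization isomorphism, and cite the relevant result on cohomology of $\Gm$-equivariant affine degenerations in characteristic zero, flagging that removing this hypothesis is precisely Conjecture \ref{conjecture artin local ring}.
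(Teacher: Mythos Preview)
Your graded case is exactly the paper's argument. In characteristic zero, however, your Rees-degeneration plan has a genuine gap. You want to degenerate $C_n(R)$ to $C_n(\gr R)$ and then invoke a ``result on cohomology of $\Gm$-equivariant affine degenerations in characteristic zero'' to identify their cohomology, but no such result exists in the generality you need. The family $\A^2_{x,y}\to \A^1_\lambda$, $\lambda=xy$, with $s\cdot(x,y)=(sx,y)$, is affine and $\Gm$-equivariant over the scaling action, has contractible total space and contractible special fiber $\{xy=0\}$, yet its general fiber is $\Gm$ with nontrivial $H^1$; this already happens over $\C$, so neither resolution nor weight arguments can rescue the principle you are appealing to. Nor does the Rees $\Gm$-action contract the total space onto the special fiber as you assert: writing $\Xi_i$ for the matrices corresponding to the generators $x_i t^{-1}$ of $\tilde R$, the action is $s\cdot(\lambda,\Xi_i)=(s^{-1}\lambda,\,s\,\Xi_i)$, so neither $s\to 0$ nor $s\to\infty$ converges in general. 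Without an honest argument linking $H^*(C_n(R))$ to $H^*(C_n(\gr R))$, the proof does not go through.

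The paper proceeds completely differently and never passes through $\gr R$. It uses the $\GL_n$-action: since $R$ is local Artinian its only simple module is $R/\mathfrak m$, so the unique semisimple length-$n$ module is $(R/\mathfrak m)^n$, and the corresponding $\GL_n$-orbit in $C_n(R)$ is the single point where every generator acts by zero. Thus the affine GIT quotient of $C_n(R)$ by $\GL_n$ is a point, and Neeman's theorem \cite{neeman1985topology} supplies an equivariant deformation retraction of $C_n(R)(\C)$ onto that unique closed orbit. This gives trivial cohomology directly; the characteristic-zero hypothesis enters only because Neeman's retraction is analytic.
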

\begin{proof}
  If $R$ is non-negatively graded, the grading induces a $\Gm$-action on $R$. This action extends to $C_n(R)$, contracting it to the ``center point'' (the trivial representation where all generators act by zero). Hence, $C_n(R)$ has trivial $\ell$-adic cohomology.

  If $k$ has characteristic zero, we use Geometric Invariant Theory. The $\GL_n$ action on $C_n(R)$ has a unique closed orbit (the trivial representation). By a result of Neeman \cite{neeman1985topology}, there exists an equivariant deformation retraction of the complex points of $C_n(R)$ onto this closed point. Hence $C_n(R)$ has trivial cohomology.
\end{proof}

\begin{corollary}
  Under the assumption of \Cref{thm: main result}, the underlying variety of $\mathrm{Nil}_{\lambda_i, x_i}$ has trivial $\ell$-adic cohomology.
\end{corollary}
\begin{proof}
  Let $R_i$ be the local ring on $X$ at $x_i$, let $\mathfrak{m}_i$ be the maximal ideal of $R_i$. We observe that $C_{\lambda_i}(\Spec R_i/\mathfrak{m}_i^{\lambda_i})$ and $\mathrm{Nil}_{\lambda_i, x_i}$ have the same underlying variety. 
\end{proof}

\begin{proposition}
\label{prop: fiber cohomology}
  The natural map $\sigma \colon S_n(X) \to C_n(X)$ induces isomorphism on rational cohomologies between fibers of $p_1$ and $p_2$.
\end{proposition}
\begin{proof}
Let $z_\lambda$ be a geometric point on $\Sym^n X$. The map $\sigma$ restricted to the fiber $p_1^{-1}(z_\lambda)$ maps surjectively onto this semi-simple locus $Z$. Specifically, $\sigma$ induces a map between $\GL_n$-homogeneous spaces:
\[
  \sigma \colon (\GL_n/\T_n)/S_\lambda \to \GL_n/\prod_i \GL_{\lambda_i}.
\]
This map is a fibration. The typical fiber is isomorphic to
\[
\left(\prod_i \GL_{\lambda_i}\right) / \left(\prod_i N(\T_{\lambda_i})\right) \iso \prod_i (\GL_{\lambda_i}/N(\T_{\lambda_i})).
\]
The spaces $\GL_m/N(\T_m)$ are known to have trivial rational cohomology (see \Cref{thm: graded character of the flag variety}). Therefore, the Leray spectral sequence for the fibration induced by $\sigma$ degenerates, and $\sigma^*$ induces an isomorphism $H^*(p_2^{-1}(z_\lambda)) \isoto H^*(p_1^{-1}(z_\lambda))$. 
\end{proof}

\subsection{Computing the stalks}
Let $z$ be a geometric point on $\Sym^n X$, we show here that the stalks of $Rp_{1*}\uQ_\ell$ and $Rp_{2*}\uQ_\ell$ at $z$ can be identified with the cohomology of the fibers. Note that this is not automatic since neither $p_1$ nor $p_2$ is proper. 

\begin{proposition}[Stalk of $Rp_{1*}\uQ_\ell$]
\label{base change p1}
  Let $z$ be a geometric point on $\Sym^n X$. The base change morphism $(Rp_{1*}\uQ_\ell)_z \to R\Gamma(p_1^{-1}(z), \uQ_\ell)$ is an isomorphism.
\end{proposition}
\begin{proof}
  Let $\hat{S}_n(X)$ denote the product $X^n \times \GL_n/\T_n$, and $\hat{p}_1$ denote the composition of the projection $\hat{S}_n(X) \to X^n$ and the finite quotient $X^n \to \Sym^n X$. We have the following commutative diagram
  \[
  \begin{tikzcd}
    \hat{S}_n(X) \ar[r, "q"] \ar[d, "\mu"] & X^n \ar[d, "\pi"] \\
    S_n(X) \ar[r, "p_1"] & \Sym^n X
  \end{tikzcd}
  \]
  The complex $R\hat{p}_{1*}\uQ_\ell$ carries an $S_n$-action via the $S_n$ quotient map $\mu$, such that $(Rp_{1*}\uQ_\ell)_z = (R\hat{p}_{1*}\uQ_\ell)_z^{S_n}$ for every geometric point $z$ on $\Sym^n X$. It now suffices to show that $(R\hat{p}_{1*}\uQ_\ell)_z = R\Gamma(\hat{p}_1^{\,-1}(z), \uQ_\ell)$ in an $S_n$-equivariant way.

  Applying proper base change to the morphism $\pi$:
  \[
  (R\pi_*Rq_* \uQ_\ell)_z = R\Gamma(\pi^{-1}(z), Rq_*\uQ_\ell)
  \]

  Projection formula implies $Rq_*\uQ_\ell = \uQ_\ell \otimes R\Gamma(\GL_n/\T_n, \uQ_\ell)$, we have:
  \[
    R\Gamma(\pi^{-1}(z), Rq_*\uQ_\ell) = R\Gamma(\pi^{-1}(z), \uQ_\ell) \otimes R\Gamma(\GL_n/\T_n, \uQ_\ell) = R\Gamma(\hat{p}_1^{\, -1}(z), \uQ_\ell)
  \]
  as $\hat{p}_1^{\, -1}(z) = \pi^{-1}(z) \times \GL_n/\T_n$. This concludes that the base change morphism $(R\hat{p}_{1*}\uQ_\ell)_z \to R\Gamma(\hat{p}_1^{\,-1}(z), \uQ_\ell)$ is an isomorphism. Note that $S_n$-equivariance follows from the naturality of the base change morphism.
\end{proof}

Computing the stalk of $Rp_{2*}\uQ_\ell$ is a more difficult problem. The goal is to show that the stalk of $Rp_{2*}\uQ_\ell$ is isomorphic to the cohomology of the fiber of $p_2$. In order to do this, we adopt the approach of \cite{kinjo2024decomposition}, where we approximate the morphism $C_n(X) \to \Sym^n X$ by a morphism related to Quot scheme that has good base change property. The key idea of using Quot scheme is from an email exchange with Tasuki Kinjo, to whom we are deeply grateful.

We introduce a few players in the approximation argument. For each $m \geq n$, let $W_{n, m}$ be the Stiefel variety parameterizing surjective linear maps from a fixed $m$-dimensional vector space to a fixed $n$-dimensional space, it is an open subscheme of the affine space $(\A^n)^m$.
The Stiefel variety $W_{n, m}$ is a principal $\GL_n$-bundle over the Grassmanian $\Gr(m, n)$. The complement of $W_{n, m}$ in $(\A^n)^m$ has codimension $m - n + 1$.

Let $\Gamma_{n, m}$ be the open subscheme of $C_n(X) \times (\A^n)^m$ parametrizing a framed length $n$ sheaf $M$, and $m$ global sections of $M$ which generate $M$ \emph{as an $\sO_X$-sheaf}. The fiber $\Gamma_{n, m}|_v$ of the projection $\Gamma_{n, m} \to C_n(X)$ contains the open subscheme $W_{n, m}$, hence its codimension in $(\A^n)^m$ is bounded below by $m - n + 1$.

Now let $z$ be a geometric point on $\Sym^n X$, consider the following Cartesian diagram:
\[
\begin{tikzcd}
  \Gamma_{n, m}|_z \ar[r, "f_m'"] \ar[d, hook, "t''"] & C_n(X)|_z \ar[r, "p_2'"] \ar[d, hook, "t'"] & \{z\} \ar[d, hook, "t"] \\
  \Gamma_{n, m} \ar[r, "f_m"] & C_n(X) \ar[r, "p_2"] & \Sym^n X
\end{tikzcd}
\]

Now consider the following:
\[
\begin{tikzcd}
  t^*Rp_{2*}\uQ_\ell \ar[r, "d"] \ar[d, "a"]  & R\Gamma(C_n(X)|_z, \uQ_\ell) \ar[d, "c"] \\
  t^*Rp_{2*}Rf_{m*}\uQ_\ell \ar[r, "b"] & R\Gamma(\Gamma_{n, m}|_z, \uQ_\ell)
\end{tikzcd}
\]
We will show that the bottom arrow $b$ is an isomorphism, and the vertical arrows $a, c$ are good approximations, hence $d$ becomes an isomorphism as $m \to \infty$.

First we show $b$ is an isomorphism. The map $p_2 \circ f_m \colon \Gamma_{n, m} \to \Sym^n X$ factorizes as a principal $\GL_n$-torsor $\Gamma_{n, m} \to \mathrm{Quot}_X(\sO^m_X, n)$ followed by a proper morphism $\mathrm{Quot}_X(\sO^m_X, n) \to \Sym^n X$. Consider the diagram:
\[
\begin{tikzcd}
  \Gamma_{n, m}|_z \ar[r, "u'"] \ar[d, hook, "t''"] & \mathrm{Quot}_X(\sO^m_X, n)|_z \ar[r, "v'"] \ar[d, hook, "s"] & \{z\} \ar[d, hook, "t"] \\
  \Gamma_{n, m} \ar[r, "u"] & \mathrm{Quot}_X(\sO^m_X, n) \ar[r, "v"] & \Sym^n X
\end{tikzcd}
\]

Consider the following:
\[
\begin{tikzcd}
  t^*Rv_*Ru_*\uQ_\ell \ar[r, "\iso"] & Rv'_*s^*Ru_*\uQ_\ell \ar[r, "\iso"] & Rv'_*Ru'_*\uQ_\ell = R\Gamma(\Gamma_{n, m}|_z, \uQ_\ell)
\end{tikzcd}
\]
The first isomorphism is proper base change applied to $v$, and the second isomorphism is due to \'etale local triviality of $u$. This establishes $b$ as an isomorphism.

To show $a$ and $c$ are good approximations, we use the following
\begin{lemma}
  Let $\pi \colon E \to X$ be a vector bundle of rank $r$ over a variety $X$, let $j \colon U \subset E$ be an open subscheme of $E$, let $f \colon U \to X$ be the composite $\pi \circ j$. Suppose there exists an integer $N$ such that for all closed point $x \in X$, the codimension of $E|_x \setminus U|_x$ in $E|_x$ is $\geq N$. Then for any complex $\sF \in D_c^{\geq 0}(X)$, we have
  \[
    fib(\sF \to Rf_*f^* \sF) \in D_c^{\geq 2N}(X)
  \]
\end{lemma}
\begin{proof}
  Let $i \colon Z = E\setminus U \inclu E$ be the complement, and $g = \pi \circ i \colon Z \to X$ be the composite map. Note that $\sF \isoto R\pi_*\pi^* \sF$, due to $E$ being a vector bundle. Thus
  \[
  fib(\sF \to Rf_* f^* \sF) = fib(R\pi_* \pi^* \sF \to R\pi_* Rj_*j^*\pi^* \sF) = Rg_*i^!\pi^*\sF
  \]
  Let $C = Rg_*i^!\pi^*\sF$, applying Verdier dual, and the fact that $\pi^! = \pi^*[2r]$, we obtain $\D(C) = Rg_!i^*\pi^*\D(\sF)[2r] = Rg_!g^*\D(\sF)[2r]$. 

  The stalks of $\D(C)$ can now be computed using proper base change applied to $g_!$. Let $\sG = \D(\sF)[2r]$, we have $\sG \in D_c^{\leq - 2r}(X)$. Thus the stalk $\D(C)_x = R\Gamma_c(Z|_x, \sG_x)$ lives in degrees less or equal to $2r - 2N - 2r = -2N$. Therefore $C = fib(\sF \to Rf_* f^* \sF) \in D_c^{\geq 2N}(X)$.
\end{proof}

Applying the above lemma to the map $\Gamma_{n, m} \to C_n(X)$, we see that both $fib(a)$ and $fib(c)$ live in degrees $\geq 2(m - n + 1)$. Combined with the fact that $b$ is an isomorphism, we see that $d$ is an isomorphism in $D_c^b(\{z\})$ as long as $m$ is taken large enough. This concludes the discussion:
\begin{proposition}[Stalk of $Rp_{2*}\uQ_\ell$]
\label{base change p2}
  Let $z$ be a geometric point on $\Sym^n X$. The base change morphism $(Rp_{2*}\uQ_\ell)_z \to R\Gamma(p_2^{-1}(z), \uQ_\ell)$ is an isomorphism.
\end{proposition}

Now combining the two base change results \Cref{base change p1}, \Cref{base change p2} and the computation of fiberwise cohomologies \Cref{prop: fiber cohomology}. We obtain \Cref{thm: fiberwise isomorphism}.

\begin{example}
  In the case $X = \A^1$, the sheaf isomorphism $\sigma^* \colon Rp_{2*}\uQ_\ell \to Rp_{1*}\uQ_\ell$ has some practical implications. Since both $S_n(\A^1)$ and $C_n(\A^1)$ are smooth affine varieties of dimension $n^2$, one may take a shifted Verdier dual of the isomorphism and obtain an isomorphism between fiberwise compactly supported cohomologies $\sigma_! \colon Rp_{1!\,}\uQ_\ell \to Rp_{2!\,}\uQ_\ell$. In particular, this implies the fibers of $p_1$ and $p_2$ have the same point counts over finite fields. Rational points on the fiber of $p_2 \colon C_n(\A^1) \to \Sym^n \A^1$ correspond to matrices with a given eigenvalue, while rational points on the fiber of $p_1$ correspond to Galois stable decompositions of the $n$-dimensional vector space into direct sums of subspaces. As a special case, we recover the classical fact that the number of $n$-by-$n$ nilpotent matrices over a finite field $\Fq$ equals the number of maximal tori in $\GL_n(\Fq)$, both equals to the number $q^{n^2 - n}$.
\end{example}

\subsection{Equivariant formality}


As a consequence of the main theorem, we obtain the equivariant formality of $C_n(X)$ under the action of $\GL_n$.

\begin{corollary}
\label{equivariant formality}
The $\GL_n$-equivariant cohomology of $C_n(X)$ is equivariantly formal. In other words the Poincare series of $\Coh_n(X) = [C_n(X)/\GL_n]$ is the product of the Poincare series of $B\GL_n$ and the Poincare series of $C_n(X)$.
\end{corollary}
\begin{proof}


  For a connected group $G$, there is a spectral sequence $E_2^{pq}\colon H^p(G, H^q(S, \Q_\ell)) \Rightarrow H^{p + q}_G(S, \Q_\ell)$ computing the $G$-quivariant cohomology of any $G$-scheme $S$. Thus by our main result \Cref{thm: main result}, the $\GL_n$-equivariant cohomologies of $S_n(X)$ and $C_n(X)$ are isomorphic, as their corresponding spectral sequences have isomorphic $E_2$ pages.

  Thus it suffices to show $S_n(X)$ is equivariantly formal. Let us denote $\hat{S}_n(X) = X^n \times \GL_n/\T_n$. Since the $\GL_n$-action on $\hat{S}_n(X)$ commutes with the $S_n$-action, we see that $H^*_{\GL_n}(S_n(X)) = H^*_{\GL_n}(\hat{S}_n(X))^{S_n}$, whence the equivariant formality for $\hat{S}_n(X)$ would imply the equivariant formality for $S_n(X)$. On the other hand, the equivariant cohomology of $\hat{S}_n(X)$ is the tensor product of the usual cohomology of $X^n$ and the equivariant cohomology of $\GL_n/\T_n$, the formality for $\hat{S}_n(X)$ follows from the formality of the equivariant cohomology of $\GL_n/\T_n$.
\end{proof}

\section{Combinatorics}
\label{sec: combinatorics}

This section is dedicated to the character theoretic computation of the cohomologies of the convoluted product $S_n(X) = X^n \times^{S_n} \GL_n/\T_n$. To do so, we develop necessary language to talk about the $S_n$ characters showing up in the cohomology of $H^*(X^n)$ via a power structure on the generating series of graded $S_n$ representations (as $n$ varies). Meanwhile, we observed that the graded $S_n$ characters on the cohomology of $\GL_n/\T_n$ can be packaged nicely using \emph{principal specializations}, this description behaves especially well with power structure, allowing us to derive a compact formula for the cohomology of $S_n(X)$, and an infinite product formula for its generating series as $n$ varies. 

Since the technicalities of \'etale cohomology is irrelevant in this section, we use $\Q$ to denote the char $0$ coefficient field of cohomologies, and use $H^*$ to mean either singular or \'etale cohomology depending on the context.

\begin{notation}
$\text{}$
  \begin{longtable}[h]{ p{0.2\textwidth} p{0.7\textwidth} }
    $\lambda \vdash n$    &     an integer partition $\lambda = (\lambda_1 \geq \lambda_2 \geq \dots \lambda_m)$ of $n$. \\
    $\ell(\lambda)$ or $|\lambda|$      &     the length of a partition \\
    $S_n$    &         the permutation group on $n$ letters \\
    $S_\lambda \subset S_n$    &     Young subgroup $S_{\lambda_1} \times S_{\lambda_2} \times \cdots \times S_{\lambda_m}$ of $S_n$ \\ 
    $\mathsf{SymPoly}$     &      the ring of symmetric polynomials with rational coefficients in infinitely many variables $x_1$, $x_2$, $x_3$, $\dots$ \\
    $s_\lambda$          &        Schur basis of $\mathsf{SymPoly}$ \\
    $h_n$, $h_\lambda$   &        complete homogeneous polynomials $h_n$, and complete homogeneous basis $h_\lambda = h_{\lambda_1} h_{\lambda_2} \cdots h_{\lambda_m}$ in $\mathsf{SymPoly}$ \\
    $p_n$, $p_\lambda$   &        power-sum symmetric polynomials $p_n = \sum_k x_k^n$, and $p_\lambda = \prod_i p_{\lambda_i}$.\\
    $\phi_n(q)$, $\phi(q)$          &        $\phi_n(q) = (q; q)_n = \prod_{i = 1}^n (1 - q^i)$, $\phi(q) = \phi_\infty(q)$ \\
    $\langle V, W\rangle$           &        the Hall inner product of two $S_n$ representations $\langle V, W\rangle = \dim (V \otimes W)^{S_n}$  \\
    $\B_n$, $\P_\lambda$      &       the complete flag variety, and the partial flag variety associated with a partition $\lambda$ \\
    $P_u(M)$      &          the Poincare polynomial $\sum_i \dim H^i(M) (-u)^i$ of a space $M$.
  \end{longtable}
\end{notation}

\subsection{Graded characters and cohomology of flag varieties} 
\label{sub:graded_characters_and_cohomology_of_flag_varieties}
  \begin{definition}[Frobenius character]
  \label{def: graded character}
  The \emph{Frobenius character} map is a ring isomorphism 
  \[
    \ch\colon \bigoplus_{n = 0}^\infty \Rep_\Q(S_n) \to \mathsf{SymPoly}
  \]
  from the ring of representations of $S_n$ under the induction product, to the ring of symmetric polynomials over $\Q$. 

  Under $\ch$, the Specht module $V_\lambda$ corresponding to a partition $\lambda \vdash n$ is sent to the Schur function $s_\lambda$. Using the $p$-basis of $\mathsf{SymPoly}$, the Frobenius character of an $S_n$ representation $V$ has the following expression
  \[
    \ch(V) = \frac{1}{n!} \sum_{\sigma \in S_n} \tr(\sigma) p_\sigma,
  \]
  where $p_\sigma$ is the power sum symmetric function associated with the cycle type of $\sigma$.

  More generally, if $V = \bigoplus_{i = 0}^\infty V_i$ is a graded $S_n$-representation over $\Q$, we define its graded character $\ch_u$ as an alternating series
  \[
    \ch_u(V) = \sum_{i = 0}^\infty \ch(V_i) (-u)^i,  
  \] 
  where $u$ is a formal variable.
  \end{definition}

  \begin{definition}[Hall inner product]
    There exists an inner product $\langle -, -\rangle$ on the ring of symmetric polynomials $\mathsf{SymPoly}$ where Schur functions $s_\lambda$ serves as an orthonormal basis (while $p_\lambda$ are orthogonal basis). This inner product is compatible with the inner product $\langle U, V\rangle_{S_n}$ of $S_n$-representations under the Frobenius character map. We can extend $\langle -, -\rangle$ bilinearly to graded characters
    \[
      \langle -, -\rangle \colon \mathsf{SymPoly}[[u]] \times \mathsf{SymPoly}[[u]] \to \Q[[u]],
    \]
    where the formal variable $u$ is treated as a scalar for the inner product. 
  \end{definition}

  {\bf Convention:}
  For simplicity, when $S_n$ acts on the cohomology groups of a space $Y$, we write $\ch_u(Y)$ for $\ch_u(H^*(Y))$.

  The cohomology ring of $\GL_n/\T_n$ is isomorphic to the cohomology ring of the flag variety $\B_n$ since $\GL_n/\T_n \to \B_n$ is an affine space bundle. The free $S_n$ action on $\GL_n/\T_n$ induces an $S_n$ action on its cohomology ring. The following presentation is due to Borel \cite{borel1953cohomologie}:
  \[
    H^*(\GL_n/\T_n) =
    H^*(\B_n) = \Q[x_1, x_2, \dots, x_n]/(e_1, e_2, \dots, e_n), \quad \text{$S_n$ permutes $x_i$'s}.
  \]

  Here we recall a convenient formula for the $S_n$ characters on the cohomology of a flag variety. (This is a special case of Garsia--Procesi's formula for the characters on the cohomology of Springer fibers, see \cite{garsia1992certain}.)
  \begin{theorem}[Graded character of the flag variety]
  \label{thm: graded character of the flag variety}
    Let $\phi_n(q) = \prod_{i = 1}^n (1 - q^i)$, let $\sp_q(f) = f(1, q, q^2, \dots)$ denote the principal specialization of a symmetric polynomial $f$. We have
    \[
      \ch_u(\B_n) = \sum_{\lambda \vdash n} \phi_n(u^2) \sp_{u^2}(s_\lambda) s_\lambda.
    \]
  \end{theorem}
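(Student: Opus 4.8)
The plan is to reduce the statement to the classical computation of the graded $S_n$-character on the coinvariant algebra $H^*(\B_n) = \Q[x_1,\dots,x_n]/(e_1,\dots,e_n)$. Since $\GL_n/\T_n \to \B_n$ is an affine space bundle, it is acyclic for the constant sheaf and $S_n$-equivariantly so, hence $\ch_u(\GL_n/\T_n) = \ch_u(\B_n)$; this lets me work entirely with the Borel presentation. The key classical input is that the coinvariant algebra $R_n := \Q[x_1,\dots,x_n]/(e_1,\dots,e_n)$ is, as an ungraded $S_n$-representation, the regular representation, and that its graded Frobenius character is the \emph{Hall--Littlewood / $q$-analogue} expression
\[
  \ch_q(R_n) = \sum_{\lambda \vdash n} q^{n(\lambda)} \frac{\phi_n(q)}{\prod_{\text{cells}} (1-q^{h})}\, s_\lambda \cdot (\text{something})
\]
— more precisely the Garsia--Procesi formula specialized at the full flag (the Springer fiber of a regular nilpotent is a point, so its ambient space is all of $\B_n$), giving the coefficient of $s_\lambda$ as the principal specialization of $s_\lambda$ up to the normalizing factor $\phi_n(q)$. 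The cohomology of $\B_n$ is concentrated in even degrees, so passing from the variable $q$ (tracking the grading by complex degree) to $u^2$ (tracking topological degree, with the sign $(-u)^i$ in $\ch_u$ becoming $(-u)^{2j} = u^{2j} = (u^2)^j$) is harmless: all signs are positive and $\ch_u(\B_n) = \ch_{u^2}^{\mathrm{alg}}(R_n)$ where the right side uses the algebraic grading.

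The main step is therefore to identify the graded multiplicity $\langle \ch_q(R_n), s_\lambda\rangle$ with $\phi_n(q)\,\sp_q(s_\lambda)$. I would derive this from the well-known \emph{graded Frobenius series of the coinvariant algebra}
\[
  \ch_q(R_n) \;=\; \sum_{\lambda \vdash n} \Big(\sum_{T \in \mathrm{SYT}(\lambda)} q^{\mathrm{maj}(T)}\Big) s_\lambda
  \;=\; \phi_n(q)\sum_{\lambda\vdash n} \sp_q(s_\lambda)\, s_\lambda,
\]
where the second equality is the classical principal-specialization identity $\sp_q(s_\lambda) = \dfrac{\sum_{T\in\mathrm{SYT}(\lambda)} q^{\mathrm{maj}(T)}}{\phi_n(q)}$ (equivalently the hook-content / Stanley formula $\sp_q(s_\lambda) = q^{n(\lambda)}\prod_{c\in\lambda}(1-q^{n+\mathrm{content}(c)})/\prod_{c\in\lambda}(1-q^{h(c)})$ combined with the hook-length expansion of $\phi_n(q)$). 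Alternatively, and perhaps more cleanly given the cited source, I would quote Garsia--Procesi directly: their graded character of the Springer fiber cohomology for partition $\mu$ specializes, at $\mu = (1^n)$ (regular nilpotent, whose Springer fiber is all of $\B_n$), precisely to the displayed formula; the factor $\phi_n(q)$ is the Poincaré polynomial normalization $\prod_{i=1}^n(1-q^i) = \prod_{i=1}^n(1+q+\dots+q^{i-1})\cdot(1-q)^n$ — wait, more precisely $\phi_n(q)$ is the product form whose ratio with the fake-degree numerator yields the principal specialization.

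The step I expect to be the main obstacle is purely bookkeeping: matching conventions between (i) the cohomological grading with its $(-u)^i$ sign convention in $\ch_u$, (ii) the algebraic grading $q$ in the coinvariant algebra where $\deg x_i = 1$ versus the topological $\deg x_i = 2$, and (iii) the precise normalization in Garsia--Procesi (whether their $q$-weights are by $\mathrm{maj}$, cocharge, or $n(\lambda)$-shifted charge), so that the scalar $\phi_n(u^2)$ comes out with the correct power and no stray sign. Once the dictionary $\ch_u(\B_n) = \ch_u(\mathbb{Q}[x]/(e_\bullet)) = \phi_n(u^2)\sum_{\lambda} \sp_{u^2}(s_\lambda) s_\lambda$ is pinned down, nothing else is needed: the sum is finite, every term is a genuine polynomial in $u^2$ (since $\sp_{u^2}(s_\lambda)\phi_n(u^2)$ is the fake-degree polynomial, manifestly a polynomial), and the total evaluates at $u=1$ to $\sum_\lambda (\dim V_\lambda)\, s_\lambda = \ch(\Q[S_n])$, confirming consistency with $H^*(\B_n)$ carrying the regular representation.
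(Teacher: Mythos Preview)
Your approach is correct and is essentially what the paper does: the paper gives no independent proof but simply cites Garsia--Procesi for the graded $S_n$-character of Springer fiber cohomology, and your proposal spells out exactly this derivation (coinvariant algebra $\Rightarrow$ fake-degree polynomial $\Rightarrow$ $\phi_n(q)\,\sp_q(s_\lambda)$ via Stanley's principal-specialization identity, then $q\mapsto u^2$). One terminological slip to fix: the nilpotent with Jordan type $\mu=(1^n)$, whose Springer fiber is all of $\B_n$, is the \emph{zero} nilpotent, not the regular one --- the regular nilpotent has type $(n)$ and its Springer fiber is a single point.
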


  \begin{remark}
    In the above theorem, setting $u = 1$ recovers the decomposition of the regular representation $\Q[S_n]$ into Specht modules.
  \end{remark}

  \begin{corollary}
  \label{lem: graded inner product with flag}
    Let $V$ be a graded $S_n$-rep, we have
    \[
      \langle \ch_u(\B_n), V \rangle = \phi_n(u^2) \sp_{u^2}(\ch_u(V)).
    \]
    Consequently, for a topological space $X$, we have
    \begin{equation}
    \label{poincare polynomial of SnX}
      P_u(S_n(X)) = \phi_n(u^2) \sp_{u^2}(\ch_u(X^n)).
    \end{equation}
  \end{corollary}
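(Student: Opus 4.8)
The plan is to read off the first identity directly from \Cref{thm: graded character of the flag variety} by pairing with $V$, and then to obtain the geometric consequence \eqref{poincare polynomial of SnX} by combining the Künneth formula for $\hat{S}_n(X) = X^n \times \GL_n/\T_n$ with the transfer isomorphism for the free $S_n$-cover $\hat{S}_n(X) \to S_n(X)$, reducing it to the first identity applied to $V = H^*(X^n)$.

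For the first identity, I would expand $\ch_u(\B_n)$ using \Cref{thm: graded character of the flag variety}: in $\mathsf{SymPoly}[[u]]$ we have $\ch_u(\B_n) = \phi_n(u^2)\sum_{\lambda \vdash n}\sp_{u^2}(s_\lambda)\,s_\lambda$, where the coefficients are polynomials in $u$ so no convergence question arises. Since the Hall pairing treats $u$ as a scalar and the Schur functions are orthonormal, pairing against (the class of) $V$ gives $\langle \ch_u(\B_n),V\rangle = \phi_n(u^2)\sum_{\lambda}\sp_{u^2}(s_\lambda)\,\langle s_\lambda,\ch_u(V)\rangle$. On the other hand, expanding $\ch_u(V) = \sum_\lambda \langle s_\lambda,\ch_u(V)\rangle\,s_\lambda$ in the orthonormal Schur basis and applying the linear operator $\sp_{u^2}$ (principal specialization in the $x$-variables, $u$ a scalar) yields $\sp_{u^2}(\ch_u(V)) = \sum_\lambda \langle s_\lambda,\ch_u(V)\rangle\,\sp_{u^2}(s_\lambda)$, which is exactly the same sum. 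This proves $\langle \ch_u(\B_n),V\rangle = \phi_n(u^2)\sp_{u^2}(\ch_u(V))$.

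For the consequence, recall that $S_n$ acts freely on $\GL_n/\T_n$ (the Weyl group acts freely on $G/T$), hence freely on $\hat{S}_n(X)$, so $\hat{S}_n(X) \to S_n(X)$ is a finite Galois \'etale cover and $H^*(S_n(X),\Q) = \bigl(H^*(\hat{S}_n(X),\Q)\bigr)^{S_n}$. By Künneth this is $\bigl(H^*(X^n)\otimes H^*(\GL_n/\T_n)\bigr)^{S_n}$ as a graded vector space, and $H^*(\GL_n/\T_n) = H^*(\B_n)$ since $\GL_n/\T_n \to \B_n$ is an affine bundle. Collecting the sign $(-u)^{i+j} = (-u)^i(-u)^j$ across the grading and using $\dim(A\otimes B)^{S_n} = \langle A,B\rangle$ (the definition of the Hall pairing on $S_n$-reps, compatible with $\ch$), one gets $P_u(S_n(X)) = \langle \ch_u(X^n),\ch_u(\B_n)\rangle$, and the first identity with $V = H^*(X^n)$ turns this into $\phi_n(u^2)\sp_{u^2}(\ch_u(X^n))$.

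I do not anticipate a real obstacle; the only care needed is bookkeeping the alternating sign $(-u)^i$ consistently so that the Künneth step, the transfer isomorphism, and the definitions of $P_u$ and $\ch_u$ match, and the remark that for $X$ of finite type all the series involved are polynomial in $u$, so the manipulations with $\sp_{u^2}$ and the Hall pairing are literally valid termwise (for general $X$ they still hold formally in $\mathsf{SymPoly}[[u]]$).
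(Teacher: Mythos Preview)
Your proposal is correct and is exactly the argument the paper leaves implicit: the first identity is an immediate expansion in the orthonormal Schur basis using \Cref{thm: graded character of the flag variety}, and the second follows from Künneth for $X^n\times\GL_n/\T_n$, the transfer isomorphism for the free $S_n$-quotient, and the paper's definition $\langle V,W\rangle=\dim(V\otimes W)^{S_n}$. Nothing is missing.
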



  To compute the number of $\Fq$-points on $S_n(X)$, we need to consider the $S_n$ character on $H^*(X^n)$ along with an endomorphism $F$ (the arithmetic Frobenius over $\Fq$). This motivates the following enhancement of \Cref{def: graded character}.

  \begin{definition}[$F$-enhanced Frobenius character]
  \label{def: F-enhanced character}
    Let $V$ be a graded $S_n$ representation $V = \bigoplus_{i \geq 0} V_i$ over $\Q$, along with a graded endomorphism $F$ which commutes with the action of $S_n$. We call such $V$ a \emph{graded $S_n$-rep enhanced by $F$.} 
    The \emph{$F$-enhanced Frobenius character} of $V$ to is defined as
    \[
      \ch_{F,u}(V) = \sum_{i \geq 0} (-u)^i \sum_{\sigma \in S_n} \frac{1}{n!} \tr(F\sigma | V_i) p_\sigma,
    \]
    where $p_\sigma$ is the power sum symmetric function associated with the cycle type of $\sigma$. Note that $\tr(F\sigma)$ only depends on the cycle type of $\sigma$ due to the commutativity of $F$ and $S_n$, thus the inner sum is essentially over the conjugacy classes of $\sigma$. 
  \end{definition}

  \begin{remark}
    When $F = \mathrm{id}$, $\ch_{F, u}(V) = \ch_u(V)$ reduces to the graded character defined in \Cref{def: graded character}.
  \end{remark}

  \begin{remark}
    Let $F = F_s + F_n$ be the Jordan decomposition of $F$ on $V$, where $F_s$ is the semi-simple part. We clearly have $\tr(F\sigma | V_i) = \tr(F_s\sigma | V_i)$. Hence for practical purposes we may always assume $F$ is semi-simple when computing $\ch_{F, u}(V)$.
  \end{remark}

  \begin{lemma}
  \label{lem: F-enhanced inner product}
    Let $V$ a graded $S_n$-rep enhanced by $F_1$, and $W$ a graded $S_n$-rep enhanced by $F_2$. The tensor product $V \otimes W$ is naturally bi-graded, and enhanced by an endomorphism $F = F_1 \otimes F_2$. Thus $F$ naturally acts on the bi-graded vector space $(V \otimes W)^{S_n}$ as well. We have
    \[
      \sum_{i, j \geq 0}(-u_1)^i (-u_2)^j \tr(F | (V_i \otimes W_j)^{S_n}) = \langle \ch_{F_1, u_1}(V), \ch_{F_2, u_2}(W) \rangle.
    \]
    where the inner product on the right hand side is the Hall inner product extended bi-linearly to graded characters.
  \end{lemma}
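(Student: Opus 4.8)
The plan is to fix a bidegree $(i,j)$ and compare the coefficient of $(-u_1)^i(-u_2)^j$ on the two sides; since each such coefficient is a finite sum, both sides are honest formal power series in $u_1,u_2$ and a termwise comparison suffices. So the lemma reduces to the single identity
\[
\tr\big(F \mid (V_i \otimes W_j)^{S_n}\big) = \langle \ch(V_i;F_1),\, \ch(W_j;F_2)\rangle,
\]
where $\ch(V_i;F_1) = \tfrac{1}{n!}\sum_{\sigma}\tr(F_1\sigma\mid V_i)\,p_\sigma$ is the (untwisted-degree) $F_1$-enhanced Frobenius character of the single graded piece, and similarly for $W_j$.

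First I would compute the left-hand side. Since $F_1$ commutes with the $S_n$-action on $V$ and $F_2$ with that on $W$, the operator $F = F_1 \otimes F_2$ commutes with the diagonal $S_n$-action on $V \otimes W$, so it preserves $(V_i \otimes W_j)^{S_n}$. Writing $\pi = \tfrac{1}{n!}\sum_{\sigma \in S_n}\sigma$ for the projector onto $S_n$-invariants, we have $F\pi = \pi F$; hence $\ker\pi$ is $F$-stable and $F\pi$ vanishes on it, which gives $\tr\big(F \mid (V_i \otimes W_j)^{S_n}\big) = \tr(F\pi \mid V_i \otimes W_j)$. Expanding $\pi$, using $F\cdot(\sigma\otimes\sigma) = (F_1\sigma)\otimes(F_2\sigma)$, and multiplicativity of trace under tensor products yields
\[
\tr\big(F \mid (V_i\otimes W_j)^{S_n}\big) = \frac{1}{n!}\sum_{\sigma \in S_n}\tr(F_1\sigma \mid V_i)\,\tr(F_2\sigma \mid W_j).
\]
Grouping this sum by conjugacy classes — recall $\tr(F_\ell\sigma)$ depends only on the cycle type because $F_\ell$ and $S_n$ commute — and using that the class of cycle type $\lambda \vdash n$ has size $n!/z_\lambda$ (with $z_\lambda$ the order of the centralizer of a permutation of type $\lambda$), we get $\sum_{\lambda \vdash n} z_\lambda^{-1}\,\tr(F_1\sigma_\lambda \mid V_i)\,\tr(F_2\sigma_\lambda \mid W_j)$ for any fixed $\sigma_\lambda$ of type $\lambda$.

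Next I would compute the right-hand side. Collecting the defining sum in \Cref{def: F-enhanced character} by conjugacy class gives $\ch(V_i;F_1) = \sum_{\lambda\vdash n} z_\lambda^{-1}\,\tr(F_1\sigma_\lambda\mid V_i)\,p_\lambda$, and likewise $\ch(W_j;F_2) = \sum_{\mu\vdash n} z_\mu^{-1}\,\tr(F_2\sigma_\mu\mid W_j)\,p_\mu$. Pairing via the Hall inner product and invoking the orthogonality relation $\langle p_\lambda, p_\mu\rangle = z_\lambda\,\delta_{\lambda\mu}$ collapses the double sum over partitions to a single one, producing exactly $\sum_{\lambda\vdash n} z_\lambda^{-1}\,\tr(F_1\sigma_\lambda\mid V_i)\,\tr(F_2\sigma_\lambda\mid W_j)$, matching the left-hand side.

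This argument is essentially bookkeeping: the only substantive inputs are the averaging formula for the trace on an invariant subspace and the power-sum orthogonality of the Hall pairing, both standard. Hence I do not expect a real obstacle; the points that deserve (small) care are the justification that $F$ genuinely acts on each $(V_i \otimes W_j)^{S_n}$ — this is where the commutativity of $F_1,F_2$ with the respective $S_n$-actions is used — and the observation that the two formal series in $u_1,u_2$ may be compared coefficientwise. (As in the preceding remark, one may replace each $F_\ell$ by its semisimple part without altering any trace, but this is not needed here.)
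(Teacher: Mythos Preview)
Your argument is correct. It is also more direct than the paper's own proof. The paper argues by additivity in $V$ and $W$ to reduce to the case where each is an irreducible $S_n$-representation concentrated in a single degree and each $F_\ell$ acts by a scalar (after passing to $\overline{\Q}$), and then says the identity ``can be checked directly''. You instead carry out the check in full generality at once: the averaging-projector identity $\tr(F\mid (V_i\otimes W_j)^{S_n})=\tr(F\pi\mid V_i\otimes W_j)$ together with multiplicativity of trace on tensor products gives the class-function expression $\tfrac{1}{n!}\sum_\sigma \tr(F_1\sigma)\tr(F_2\sigma)$, and power-sum orthogonality $\langle p_\lambda,p_\mu\rangle=z_\lambda\delta_{\lambda\mu}$ recognizes this as the Hall pairing of the two enhanced characters. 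Your route avoids the base change and the reduction step entirely and makes transparent that the lemma is nothing more than the standard projection formula plus $p$-basis orthogonality; the paper's route is terser on the page but leaves the actual verification implicit.
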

  \begin{proof}
    As both sides of the equality are additive for short exact sequences in $V$ and $W$ (respectively), it suffices to prove it in the case when $V$ and $W$ are irreducible $S_n$-reps, and both $F_1$ and $F_2$ act as scalars (upon base change to $\overline{\Q}$). In which case the equality can be checked directly using the definition of $\ch_{F, u}$.
  \end{proof}

  Let $X$ be a variety over $\Fq$, the cohomology groups $H^*(X^n)$ form a graded $S_n$ representation with a commuting arithmetic Frobenius endomorphism $F$ commuting with $S_n$. Same for the space $\GL_n/\T_n$. The number of $\Fq$ points on the quotient $S_n(X)$ is determined by the alternating trace of $F$ on its cohomologies, which can be computed using the previous lemma.
  \begin{corollary}
  \label{cor: point counts on X^n x_Sn GLn/Tn}
    The number of $\Fq$ points on $S_n(X)$ equals
    \[
      |\GL_n(\Fq)| \cdot \sp_{q^{-1}}(\ch_{F, 1}(X^n)).
    \]
  \end{corollary}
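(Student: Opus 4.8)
The plan is to apply the Grothendieck--Lefschetz trace formula to the stacky quotient $S_n(X) = X^n \times^{S_n} \GL_n/\T_n$ and then recognize the resulting expression as the Hall inner product computed in \Cref{lem: F-enhanced inner product}. First I would pass to the étale setting: since $S_n(X)$ is a variety (the $S_n$-action on $X^n \times \GL_n/\T_n$ is free on the second factor, so the quotient is a scheme), the number of $\Fq$-points is $\#S_n(X)(\Fq) = \sum_i (-1)^i \tr(F \mid H^i_{\et,c}(S_n(X), \Q_\ell))$, and because $\GL_n/\T_n$ is smooth and $X^n$ may be taken with compact supports, I would instead work with ordinary cohomology after invoking Poincaré duality on the relevant smooth pieces — or more simply, use the covering $\widehat S_n(X) = X^n \times \GL_n/\T_n \to S_n(X)$, which is finite étale of degree $n!$, so that $H^*_{\et}(S_n(X)) = H^*_{\et}(\widehat S_n(X))^{S_n}$ as an $F$-module. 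Thus the alternating trace of $F$ on $H^*(S_n(X))$ equals the alternating trace of $F$ on $\bigl(H^*(X^n) \otimes H^*(\GL_n/\T_n)\bigr)^{S_n}$, using the Künneth decomposition $H^*(\widehat S_n(X)) = H^*(X^n) \otimes H^*(\GL_n/\T_n)$ as $S_n$- and $F$-modules.

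Next I would feed this into \Cref{lem: F-enhanced inner product} with $V = H^*(X^n)$ enhanced by the arithmetic Frobenius $F$ and $W = H^*(\GL_n/\T_n)$ enhanced by its arithmetic Frobenius $F'$. The lemma yields that the bigraded alternating trace of $F \otimes F'$ on $(V \otimes W)^{S_n}$ equals $\langle \ch_{F,u_1}(H^*(X^n)), \ch_{F',u_2}(H^*(\GL_n/\T_n)) \rangle$, and specializing $u_1 = u_2 = 1$ collapses this to the genuine Lefschetz number $\#S_n(X)(\Fq) = \langle \ch_{F,1}(X^n), \ch_{F',1}(\GL_n/\T_n) \rangle$. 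The remaining task is to identify the second factor: on $\GL_n/\T_n$ (equivalently $\B_n$, since the projection is an affine-space bundle and hence the Frobenius acts on $H^{2j}$ by the extra factor $q^j$ coming from the Tate twist), the cohomology is the Borel presentation $\Q_\ell[x_1,\dots,x_n]/(e_1,\dots,e_n)$ with $S_n$ permuting the $x_i$ and $F'$ acting on $H^{2j}$ as multiplication by $q^j$. This is precisely the setup underlying \Cref{thm: graded character of the flag variety}; tracking the Tate twist shows $\ch_{F',1}(\GL_n/\T_n)$ is obtained from $\ch_u(\B_n)$ by the substitution $-u \mapsto$ (the geometric Frobenius eigenvalue) together with the $q$-grading. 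Concretely, $\langle \ch_{F',1}(\GL_n/\T_n), - \rangle$ becomes the operator $\phi_n(q) \cdot \sp_{q}$ on symmetric functions — this is the $F$-enhanced analogue of \Cref{lem: graded inner product with flag}, with $u^2$ replaced throughout by the Frobenius weight $q$. Combining, $\#S_n(X)(\Fq) = \phi_n(q) \sp_q(\ch_{F,1}(X^n))$.

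Finally I would reconcile this with the stated form $|\GL_n(\Fq)| \cdot \sp_{q^{-1}}(\ch_{F,1}(X^n))$. This is a bookkeeping step about Frobenius normalizations: one has the identity $|\GL_n(\Fq)| = q^{\binom n2} \phi_n(q) \cdot (\text{a power of }q)$, and the substitution $q \leftrightarrow q^{-1}$ in $\sp$ absorbs the difference between arithmetic and geometric Frobenius eigenvalues (i.e. between weights $q^j$ and $q^{-j}$ on $H^{2j}$), via the functional-equation symmetry $\sp_q(f) = q^{\deg f}\,\sp_{q^{-1}}(f)$ for homogeneous $f$ combined with $\phi_n(q) = \pm q^{\binom{n+1}{2}}\phi_n(q^{-1})$. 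The main obstacle is precisely this last normalization: one must be scrupulous about which Frobenius (arithmetic vs. geometric) acts in $\ch_{F,u}$, about the Tate twists introduced by the affine-bundle map $\GL_n/\T_n \to \B_n$ and by the odd-vanishing of $H^*(\B_n)$, and about the matching of $|\GL_n(\Fq)|$ with $q^{\binom n2}\phi_n(q)$, so that the $q \to q^{-1}$ flip in the statement comes out exactly right. The cohomological input — finite étale descent, Künneth, and the trace formula — is routine; the care lies entirely in the weight bookkeeping.
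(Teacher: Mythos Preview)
Your overall strategy is the paper's: descend along the free $S_n$-cover $X^n\times\GL_n/\T_n\to S_n(X)$, apply K\"unneth and \Cref{lem: F-enhanced inner product}, and evaluate the flag factor via \Cref{thm: graded character of the flag variety}. The difference is entirely in the step you yourself flag as the obstacle, and there the argument as written has a real gap. You assert that ``specializing $u_1=u_2=1$ collapses this to the genuine Lefschetz number $\#S_n(X)(\Fq)=\langle\ch_{F,1}(X^n),\ch_{F',1}(\GL_n/\T_n)\rangle$''; but the alternating trace of Frobenius on \emph{ordinary} cohomology $H^*$ is never the point count---one needs either $H^*_c$ with geometric Frobenius, or $H^*$ with arithmetic Frobenius together with the Poincar\'e-duality prefactor $q^{\dim}$. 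Because you have implicitly used geometric Frobenius on $H^*(\B_n)$ (hence your $\sp_q$), the missing prefactor is exactly what your ``reconciliation'' paragraph is groping for; the identities offered there (``$|\GL_n(\Fq)|=q^{\binom n2}\phi_n(q)\cdot(\text{a power of }q)$'', a putative symmetry $\sp_q\leftrightarrow\sp_{q^{-1}}$) are not stated correctly and do not close the gap.

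The paper avoids all of this by fixing $F$ to be the \emph{arithmetic} Frobenius from the outset (as announced just before \Cref{def: F-enhanced character}) and using the smooth trace formula in the form $|Y(\Fq)|=q^{\dim Y}\sum_i(-1)^i\tr(F\mid H^i(Y))$. This gives in one line
\[
|S_n(X)(\Fq)|=q^{n^2}\,\bigl\langle \ch_{F,1}(X^n),\ \ch_{F,1}(\GL_n/\T_n)\bigr\rangle .
\]
Since arithmetic Frobenius acts on $H^{2j}(\B_n)$ by $q^{-j}$, one has $\ch_{F,1}(\GL_n/\T_n)=\ch_u(\B_n)\big|_{u^2=q^{-1}}$, and the computation of \Cref{lem: graded inner product with flag} yields $\langle\ch_{F,1}(\GL_n/\T_n),-\rangle=\phi_n(q^{-1})\,\sp_{q^{-1}}(-)$ directly, with no flip. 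The remaining identity $q^{n^2}\phi_n(q^{-1})=|\GL_n(\Fq)|$ is elementary. So the fix to your argument is simply to commit to arithmetic Frobenius on $H^*$ and carry the factor $q^{\dim S_n(X)}$ from the trace formula; then $\sp_{q^{-1}}$ appears immediately and the final paragraph becomes unnecessary.
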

  \begin{proof}
    By \Cref{lem: F-enhanced inner product} and the Behrend trace formula (trace formula for cohomology groups on smooth varieties), we have
    \[
    \begin{aligned}
      S_n(X)(\Fq) & = q^{n^2} \langle \ch_{F, 1}(X^n), \ch_{F, 1}(\GL_n/\T_n)\rangle \\
      & = q^{n^2} \phi_n(q^{-1})\sp_{q^{-1}}(\ch_{F, 1}(X^n)) \\
      & = |\GL_n(\Fq)| \cdot \sp_{q^{-1}}(\ch_{F, 1}(X^n))
    \end{aligned}
    \]
    where the second equality follows from the same calculation as in \Cref{lem: graded inner product with flag}.
  \end{proof}


\subsection{Generating Series and Zeta function} 
\label{sub: generating series and zeta function}
  While we have not used the (induction product) ring structure on $\mathsf{SymPoly}$, for what follows the ring structure becomes essential. Let $(V,F)$ be a fixed graded vector space with a graded endomorphism $F$. By abuse of notation, we denote the natural endomorphism $F^{\otimes n}$ on $V^{\otimes n}$ still by $F$. We equip the \textbf{signed $S_n$ representation} on $V^{\otimes n}$ as the left $S_n$ action satisfying:
  $$
    (ij)(\cdots v_i \otimes v_j \cdots) = (-1)^{\deg(v_i)\deg(v_j)}(\cdots v_j \otimes v_i \cdots)
  $$
  for $(ij) \in S_n$ a transposition, and $v_i, v_j$ homogeneous.
  By \cite{macdonald1962symmetric}, such an action exists uniquely. (There is a direct albeit more technical definition in \cite{macdonald1962symmetric}, which shows that it is clearly well-defined, but the current form is enough to give an algorithm to compute the action on any pure tensor.)

  We now compute the generating series of $F$-enhanced characters of $V^{\otimes n}$.

  \begin{lemma}
    Let $(V, F)$ be a graded vector space with a graded endomorphism, let $\mathcal{A} \subset V$ be a homogeneous eigen-basis of the semi-simple part of $F$ on $V$, and $\chi \colon \mathcal{A} \to \overline{\Q}$ be the corresponding eigen-values. 

    Let $\sigma \in S_n$ be a permutation whose cycle type corresponds to a partition (in exponential form) $\lambda = (1^{a_1} 2^{a_2} \cdots n^{a_n})$. Let $\tr_u(F\sigma | V^{\otimes n}) = \sum_i \tr(F\sigma|(V^{\otimes n})_i) (-u)^i$ be the graded trace of $F\sigma$ on $V^{\otimes n}$. We have
    \[
      \tr_u(F\sigma|V^{\otimes n}) = \prod_{i\geq 1} \left(\sum_{\alpha\in \mathcal{A}}  (-1)^{\deg(\alpha)}u^{i \deg(\alpha)}\chi(\alpha)^{i}\right)^{a_i}.
    \]
  \end{lemma}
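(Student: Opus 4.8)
The plan is to reduce the statement to a direct computation on pure tensors by exploiting multiplicativity in the cycle structure of $\sigma$. First, I would fix a permutation $\sigma$ of cycle type $\lambda = (1^{a_1} 2^{a_2} \cdots n^{a_n})$ and observe that $\sigma$ decomposes $V^{\otimes n}$ as a tensor product over the cycles: if a cycle has length $i$, the corresponding tensor factor is $V^{\otimes i}$, and $\sigma$ acts on it as the cyclic shift $\tau_i$ (twisted by the signed action, i.e.\ with Koszul signs). Since $F = F^{\otimes n}$ respects this decomposition, the graded trace $\tr_u(F\sigma \mid V^{\otimes n})$ factors as a product over all cycles, hence as $\prod_{i \ge 1} \big(\tr_u(F^{\otimes i}\tau_i \mid V^{\otimes i})\big)^{a_i}$. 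This is the step that turns the global trace into the claimed product, and it is essentially formal once the signed $S_n$-action is unwound cycle by cycle; the only care needed is to check that the Koszul signs from moving homogeneous vectors past each other within a single cycle are correctly accounted for — but since each cycle acts on its own tensor block, there is no sign interaction between distinct cycles.

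Next I would compute the single-cycle trace $\tr_u(F^{\otimes i}\tau_i \mid V^{\otimes i})$. Replacing $F$ by its semisimple part (harmless for traces, as noted in the remark following \Cref{def: F-enhanced character}), I diagonalize: pick the homogeneous eigenbasis $\mathcal{A}$ with eigenvalues $\chi(\alpha)$. A pure tensor $\alpha_1 \otimes \cdots \otimes \alpha_i$ with $\alpha_j \in \mathcal{A}$ is sent by the cyclic shift $\tau_i$ (up to a sign) to $\alpha_{\tau_i^{-1}(1)} \otimes \cdots$, so it contributes to the trace only when all $\alpha_j$ are equal, say to $\alpha \in \mathcal{A}$. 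For such a diagonal tensor, $\tau_i$ fixes it up to the sign picked up by cyclically permuting $i$ copies of a degree-$\deg(\alpha)$ vector, which is $(-1)^{(i-1)\deg(\alpha)^2} = (-1)^{(i-1)\deg(\alpha)}$ (using $d^2 \equiv d \bmod 2$); together with the $(-u)^{i\deg(\alpha)}$ from the grading and the $\chi(\alpha)^i$ from $F^{\otimes i}$, the total contribution is $(-1)^{i\deg(\alpha)}(-1)^{(i-1)\deg(\alpha)} u^{i\deg(\alpha)} \chi(\alpha)^i = (-1)^{\deg(\alpha)} u^{i\deg(\alpha)} \chi(\alpha)^i$. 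Summing over $\alpha \in \mathcal{A}$ gives exactly the factor $\sum_{\alpha \in \mathcal{A}} (-1)^{\deg(\alpha)} u^{i\deg(\alpha)} \chi(\alpha)^i$.

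Combining the two steps yields the asserted formula. The only genuine subtlety — and the step I would be most careful about — is the sign bookkeeping in the single-cycle computation: one must verify that the sign of the cyclic shift on a diagonal tensor of a degree-$d$ homogeneous vector is $(-1)^{(i-1)d}$ (equivalently, that an odd cycle contributes no sign and an even-length cycle contributes $(-1)^d$), which follows from writing the $i$-cycle as a product of $i-1$ adjacent transpositions and applying the defining sign rule of the signed $S_n$-action once per transposition. Everything else is routine: the factorization over cycles is formal, and the restriction to the semisimple part of $F$ is justified by the earlier remark. One last bookkeeping point is that eigenvalues may lie in $\overline{\Q}$ rather than $\Q$, but this causes no difficulty since traces are Galois-invariant and the identity is an equality of power series with $\overline{\Q}$ coefficients that happens to land in $\Q[[u]]$.
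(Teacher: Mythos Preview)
Your proposal is correct and follows essentially the same argument as the paper. The paper works directly with the full basis $\{\bigotimes_i \alpha_i\}$ of $V^{\otimes n}$, observes that $F\sigma$ is monomial in this basis, and identifies the contributing vectors as those with $\alpha_i=\alpha_{\sigma(i)}$ (i.e.\ constant along cycles), obtaining the same per-cycle sign $(-1)^{(\lambda_j-1)\deg(\beta_j)}$ that you derive; your preliminary factorization over cycles is just a reorganization of the same computation.
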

  \begin{proof}
    As mentioned earlier, we may assume $F = F_s$ is already semi-simple. Thus $V^{\otimes n}$ has a homogeneous basis $\{ \bigotimes_{i = 1}^n \alpha_i \ |\ \text{for }\alpha_i \in \mathcal{A}\}$. Note that under this basis, the matrix for $F\sigma$ is \emph{monomial}, meaning that there's only one non-zero entry for each row and each column. 
    Thus the only contribution to trace comes from basis vectors $\bigotimes_{i = 1}^n \alpha_i$ satisfying
    \[
      \alpha_i = \alpha_{\sigma(i)} \text{ for all }i.
    \]
    In other words, if $\sigma$ is decomposed into $r$ cycles of lengths $\lambda_1, \lambda_2, \dots, \lambda_r$, the set $\{\alpha_i\}_{i = 1}^n$ will only have $r$ distinct elements $\{\beta_j\}_{j = 1}^r$, and we may compute
    \[
    \begin{split}
      F\sigma \left( \bigotimes\nolimits_i \alpha_i \right) & = (-1)^{\sum_{j = 1}^r (\lambda_j - 1) \deg (\beta_j)} F\left( \bigotimes\nolimits_i \alpha_i \right)\\
      & = \prod_{j = 1}^r (-1)^{ (\lambda_j - 1) \deg (\beta_j)} \chi(\beta_j)^{\lambda_j}\left( \bigotimes\nolimits_i \alpha_i \right)
    \end{split}
    \]
    and we get
    \[
    \begin{split}
      \tr_u(F\sigma | V^{\otimes n}) & = \sum_{\beta_1,\dots,\beta_r \in \mathcal{A}} \prod_{j = 1}^r (-1)^{ (\lambda_j - 1) \deg (\beta_j)} \chi(\beta_j)^{\lambda_j}(-u)^{\lambda_j \deg(\beta_j)} \\
      & = \prod_{j = 1}^r \sum_{\beta \in \mathcal{A}}(-1)^{\deg(\beta)} \chi(\beta)^{\lambda_j} u ^{\lambda_j \deg(\beta)} \\
      & = \prod_i \left(\sum_{\beta \in \mathcal{A}}(-1)^{\deg(\beta)} \chi(\beta)^{i} u ^{i \deg(\beta)}\right)^{a_i}.\qedhere
    \end{split}
    \]
  \end{proof}



  \begin{theorem}
  \label{thm: F enhanced generating series of tensor power}
    We have (by passing to $\overline{\Q}$)
    \begin{equation}
        \sum_{n\geq 0} \ch_{F,u}(V^{\otimes n}) t^n = \prod_{\chi} \left(1 + \sum_{n = 1}^\infty h_n \chi^n t^n \right)^{[V_\chi]},
    \end{equation}
    where $V_\chi = \bigoplus_{i \geq 0} V_{i, \chi}$ is the graded generalized eigenspace associated with the eigenvalue $\chi$ of $F$, and $[V_\chi] = \sum_{i \geq 0} (- u)^i \dim V_{i, \chi}$. The exponentiation on the right hand side simply means:
    \[
      \left(1 + \sum_{n = 1}^\infty h_n \chi^n t^n \right)^{[V_\chi]} = \prod_i \left(1 + \sum_{n = 1}^\infty h_n \chi^n (u^it)^n \right)^{(-1)^i \dim V_{i, \chi}}.
    \]
  \end{theorem}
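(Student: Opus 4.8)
The plan is to reduce the identity to the classical exponential formula for symmetric functions, feeding in the trace formula of the previous lemma as the only nontrivial input. We work over $\overline{\Q}$ throughout and, by the remark preceding that lemma, assume $F = F_s$ is semisimple, so that $\mathcal{A}$ is a genuine eigenbasis and $\dim V_{j,\chi}$ is the number of $\alpha \in \mathcal{A}$ of degree $j$ with $\chi(\alpha)=\chi$.

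First I would rewrite $\ch_{F,u}(V^{\otimes n})$ as a sum over cycle types. Grouping $\sigma \in S_n$ by cycle type $\lambda = (1^{a_1}2^{a_2}\cdots)\vdash n$, using that there are $n!/z_\lambda$ such $\sigma$ with $z_\lambda = \prod_i i^{a_i}a_i!$ and that $p_\sigma = p_\lambda = \prod_i p_i^{a_i}$, the definition of $\ch_{F,u}$ together with the previous lemma gives
\[
\ch_{F,u}(V^{\otimes n}) = \sum_{\lambda \vdash n}\frac{1}{z_\lambda}\,\tr_u(F\sigma_\lambda \mid V^{\otimes n})\,p_\lambda = \sum_{\lambda \vdash n}\frac{1}{z_\lambda}\prod_{i \geq 1}\bigl(g_i\,p_i\bigr)^{a_i},
\qquad g_i := \sum_{\alpha \in \mathcal{A}}(-1)^{\deg\alpha}u^{i\deg\alpha}\chi(\alpha)^i .
\]
Then, since a partition of $n$ is the same as a finitely supported sequence $(a_1,a_2,\dots)$ with $\sum_i i a_i = n$ and $t^{|\lambda|} = \prod_i (t^i)^{a_i}$, summing over $n$ factors across cycle lengths:
\[
\sum_{n \geq 0}\ch_{F,u}(V^{\otimes n})t^n = \prod_{i \geq 1}\ \sum_{a \geq 0}\frac{1}{a!}\Bigl(\frac{g_i\,p_i\,t^i}{i}\Bigr)^{a} = \exp\Bigl(\sum_{i \geq 1}\frac{g_i\,p_i\,t^i}{i}\Bigr),
\]
a legitimate identity in $\mathsf{SymPoly}[[u]][[t]]$ because each factor is $1 + O(t^i)$, so the product converges $t$-adically.

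Next comes the bookkeeping turning eigenbasis data into generalized-eigenspace data: grouping the terms of $g_i$ by generalized eigenvalue $\chi$ and cohomological degree $j$ gives $g_i = \sum_\chi \chi^i \sum_{j \geq 0}(-1)^j u^{ij}\dim V_{j,\chi}$, and substituting into the exponent and interchanging the finite sums yields
\[
\sum_{i \geq 1}\frac{g_i\,p_i\,t^i}{i} = \sum_{\chi}\sum_{j \geq 0}(-1)^j\dim V_{j,\chi}\ \sum_{i \geq 1}\frac{p_i\,(\chi u^j t)^i}{i}.
\]
Finally I would invoke the classical identity $\exp\bigl(\sum_{i \geq 1}\tfrac{1}{i}p_i s^i\bigr) = \sum_{n \geq 0}h_n s^n$ (equivalently $\prod_k(1-x_k s)^{-1} = \sum_n h_n s^n$) with $s = \chi u^j t$, converting each exponential factor into $\bigl(1 + \sum_{n \geq 1}h_n\chi^n(u^j t)^n\bigr)^{(-1)^j\dim V_{j,\chi}}$; taking the product over $j$ recovers exactly the bigraded exponentiation $(\,1 + \sum_n h_n \chi^n t^n\,)^{[V_\chi]}$ as defined in the statement, and the remaining product over $\chi$ gives the claimed formula.

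I do not expect a genuine obstacle here: the mathematical content is exhausted by the previous lemma and Newton's identity in the guise $\exp(\sum_i p_i s^i/i) = \sum_n h_n s^n$. The only points needing care are purely formal — the reindexing between permutations, cycle types, and finitely supported exponent sequences so that the exponential formula applies verbatim, and ensuring the nested sums and products over the eigenvalues $\chi$, the degrees $j$, and the cycle lengths $i$ are manipulated inside a topology where they converge (working $t$-adically and using that $F$ has finitely many eigenvalues with finite-dimensional graded pieces).
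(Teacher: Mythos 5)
Your proof is correct and follows essentially the same route as the paper's: reduce to a sum over cycle types, substitute the trace formula of the preceding lemma, pull the product over cycle lengths out to obtain an exponential, and conclude with the identity $\exp\bigl(\sum_{i\geq1}p_i s^i/i\bigr)=\sum_{n\geq0}h_n s^n$. The only cosmetic difference is that the paper keeps the final product indexed by basis vectors $\alpha\in\mathcal{A}$ and then observes this is equivalent to the stated $[V_\chi]$-exponentiation, whereas you carry out the regrouping by $(\chi,j)$ explicitly — a harmless and arguably clearer bookkeeping choice.
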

  \begin{proof}
    Keeping notations the same as in the previous lemma, let $\sigma_\lambda$ denote any permutation of cycle type $\lambda$, and recalling that $z_\lambda=\prod_{i\geq 1}i^{a_i}a_i!$ for $\lambda=1^{a_1}2^{a_2}\dots$ and
    \[ \exp\left(\sum_{n\geq 1}\frac{p_n}{n}t^n\right)=1+\sum_{n\geq 1}h_n t^n,\]
    we have 
    \begin{align*}
        \sum_{n\geq 0} \ch_{F,u}(V^{\otimes n}) t^n&=\sum_{\lambda} \tr_u(F\sigma_\lambda|V^{\otimes \abs{\lambda}})\frac{p_\lambda}{z_\lambda} t^{|\lambda|}\\
        &=\sum_{a_1,a_2,\dots\geq 0}  \prod_{i\geq 1} \left(\sum_{\alpha\in \mathcal{A}}  (-1)^{\deg(\alpha)}u^{i \deg(\alpha)}\chi(\alpha)^{i} t^i \right)^{a_i} \frac{p_i^{a_i}}{i^{a_i}a_i!}
        \\
        &=\prod_{i\geq 1} \sum_{a\geq 0}\left(\frac{p_i}{i}\sum_{\alpha\in \mathcal{A}}  (-1)^{\deg(\alpha)}u^{i \deg(\alpha)}\chi(\alpha)^i t^i \right)^a/a!\\
        &=\exp\left(\sum_{i\geq 1}\frac{p_i}{i}\sum_{\alpha\in \mathcal{A}}  (-1)^{\deg(\alpha)}u^{i \deg(\alpha)}\chi(\alpha)^i t^i\right)\\
        &=\exp\left(\sum_{\alpha\in \mathcal{A}}(-1)^{\deg(\alpha)}\sum_{i\geq 1}\frac{p_i (u^{\deg(\alpha)}\chi(\alpha))^i t^i}{i}\right)\\
        &=\prod_{\alpha\in \mathcal{A}} \left(1+\sum_{n\geq 1} h_n (u^{\deg(\alpha)}\chi(\alpha))^n t^n\right)^{(-1)^{\deg(\alpha)}},
    \end{align*}
    which is equivalent to what we want.
  \end{proof}

  \begin{definition}[The groupoid generating series $Z_X(t)$]
  \label{def: groupoid generating series Z_X(t)}
    Let $X$ be a quasi-projective variety over a finite field of size $q$. Recall that the general linear group $\GL_n(\Fq)$ acts on the set of rational points $C_n(X)(\Fq)$. We define $Z_X(t)$ to be the {\it groupoid generating series}
    \[
      Z_X(t) = 1 + \sum_{n = 1}^\infty \frac{|C_n(X)(\Fq)|}{|\GL_n(\Fq)|} t^n.
    \]
  \end{definition}

  We arrive at a new combinatorial proof of the following result.
  \begin{theorem}[\cite{huang2023mutually}]
  \label{thm: zeta product formula}
    When $X$ is a smooth curve over a finite field of size $q$, the groupoid generating series $Z_X(t)$ has an infinite product formula
    \[
      Z_X(t) = \prod_{i = 1}^\infty \zeta_X(q^{-i}t).
    \]
  \end{theorem}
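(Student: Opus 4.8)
The plan is to replace the point counts of $C_n(X)$ that define $Z_X$ by point counts of the Fermionic space $S_n(X)$, which \Cref{cor: point counts on X^n x_Sn GLn/Tn} computes completely, and then match the resulting generating series with $\prod_{i\ge1}\zeta_X(q^{-i}t)$ via the $q$-binomial theorem. The first step is to show $|C_n(X)(\Fq)| = |S_n(X)(\Fq)|$ for every $n$. Since $X$ is a smooth curve, both spaces are smooth of dimension $n^2$: coherent sheaves on a curve have vanishing $\mathrm{Ext}^2$ and the Euler pairing of length-$n$ torsion sheaves vanishes, so $\Coh_n(X)$ is smooth of dimension $0$ and its $\GL_n$-torsor $C_n(X)$ is smooth of dimension $n^2$, while $S_n(X) = X^n\times^{S_n}\GL_n/\T_n$ is the quotient of a smooth variety by the \emph{free} diagonal $S_n$-action (the Weyl group acts freely on $\GL_n/\T_n$), hence smooth of dimension $n + (n^2-n) = n^2$. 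As $\sigma$ is a morphism of $\Fq$-varieties, the isomorphism $\sigma^*\colon H^*_{\et}(C_n(X),\Q_\ell)\isoto H^*_{\et}(S_n(X),\Q_\ell)$ of \Cref{thm: main result} is Galois-equivariant, and Poincaré duality on these two smooth $n^2$-dimensional varieties transports it to compactly supported cohomology; the Grothendieck--Lefschetz trace formula then gives the equality of point counts, so $Z_X(t) = 1 + \sum_{n\ge1}\frac{|S_n(X)(\Fq)|}{|\GL_n(\Fq)|}t^n$.

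Next I would feed in \Cref{cor: point counts on X^n x_Sn GLn/Tn}: $\frac{|S_n(X)(\Fq)|}{|\GL_n(\Fq)|} = \sp_{q^{-1}}(\ch_{F,1}(X^n)) = \sp_{q^{-1}}(\ch_{F,1}(V^{\otimes n}))$, where $V = H^*(X,\Q_\ell)$ carries its cohomological grading, the signed $S_n$-action on $V^{\otimes n}$ (which by Künneth is the $S_n$-action on $H^*(X^n)$), and the arithmetic Frobenius $F$. Summing over $n$ and applying the ring homomorphism $\sp_{q^{-1}}$ termwise in $t$ to \Cref{thm: F enhanced generating series of tensor power} at $u = 1$ — the product over eigenvalues $\chi$ of $F$ is finite because $\dim_{\Q_\ell}V < \infty$ — and using $\sp_{q^{-1}}(h_n) = h_n(1,q^{-1},q^{-2},\dots) = \phi_n(q^{-1})^{-1}$ together with the $q$-binomial identity $\sum_{n\ge0}\frac{z^n}{\phi_n(q^{-1})} = \prod_{j\ge0}(1-q^{-j}z)^{-1}$, I obtain
\[
  Z_X(t) = \prod_{\chi}\Big(\prod_{j\ge0}\frac{1}{1-q^{-j}\chi t}\Big)^{[V_\chi]} = \prod_{i\ge1}\ \prod_{\chi}\big(1 - q\cdot q^{-i}t\cdot\chi\big)^{-[V_\chi]},
\]
where $[V_\chi] = \sum_i(-1)^i\dim V_{i,\chi}$ and $V_\chi$ denotes the generalized $\chi$-eigenspace of $F$.

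It remains to identify the inner factor $\prod_{\chi}(1-qs\chi)^{-[V_\chi]}$, evaluated at $s = q^{-i}t$, with $\zeta_X(q^{-i}t)$. This is the Grothendieck--Lefschetz trace formula for the smooth curve $X$ itself: Poincaré duality on $X$ over $\mathbb{F}_{q^m}$ gives $|X(\mathbb{F}_{q^m})| = q^m\sum_j(-1)^j\tr(F^m\mid H^j(X))$, so
\[
  \zeta_X(s) = \exp\Big(\sum_{m\ge1}|X(\mathbb{F}_{q^m})|\tfrac{s^m}{m}\Big) = \prod_j\det\big(1-qsF\mid H^j(X)\big)^{(-1)^{j+1}} = \prod_{\chi}(1-qs\chi)^{-[V_\chi]},
\]
using $\exp\big(\sum_m\tr(A^m)\tfrac{z^m}{m}\big) = \det(1-zA)^{-1}$. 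Combining the two displays yields $Z_X(t) = \prod_{i\ge1}\zeta_X(q^{-i}t)$.

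I expect the only genuinely non-formal point to be the very first one. \Cref{thm: main result} concerns ordinary cohomology, whereas $Z_X$ is assembled from point counts of the \emph{non-proper} varieties $C_n(X)$, so one cannot invoke Grothendieck--Lefschetz directly and must route through Poincaré duality — which is exactly why the smoothness of $C_n(X)$ (equivalently of $\Coh_n(X)$, using that $X$ is a curve) and the matching of dimensions enter. The remaining care is bookkeeping: keeping straight which Frobenius (arithmetic versus geometric) appears in each trace formula, and checking that the convention of \Cref{cor: point counts on X^n x_Sn GLn/Tn} is the one for which $\prod_{\chi}(1-qs\chi)^{-[V_\chi]}$ comes out to be exactly $\zeta_X(s)$. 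Everything past the first step is the symmetric-function and $q$-series calculus already developed in \Cref{sub:graded_characters_and_cohomology_of_flag_varieties,sub: generating series and zeta function}.
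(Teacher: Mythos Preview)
Your proof is correct and follows essentially the same route as the paper: reduce $|C_n(X)(\Fq)|$ to $|S_n(X)(\Fq)|$ via \Cref{thm: main result} plus smoothness of both spaces in dimension $n^2$, then apply \Cref{cor: point counts on X^n x_Sn GLn/Tn} and \Cref{thm: F enhanced generating series of tensor power}, and finally specialize to recognize $\zeta_X$. Your presentation is slightly more explicit about the Poincar\'e duality passage to compactly supported cohomology and about the Frobenius bookkeeping, and you invoke the $q$-binomial identity for $\sp_{q^{-1}}(h_n)$ where the paper uses the equivalent sum-product formula $\sum_n h_n t^n=\prod_i(1-x_it)^{-1}$, but these are cosmetic rather than structural differences.
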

  \begin{proof}
    Note that by our main result \Cref{thm: main result} $C_n(X)$ and $S_n(X)$ have the same cohomology. In the case when $X$ is a smooth curve, $C_n(X)$ is a smooth variety of dimension $n^2$, thus by trace formula we have $C_n(X)(\Fq) = S_n(X)(\Fq)$. Now we can evaluate $Z_X(t)$ using \Cref{cor: point counts on X^n x_Sn GLn/Tn}
    \[
    \begin{aligned}
      1 + \sum_{n = 1}^\infty \frac{S_n(X)(\Fq)}{\GL_n(\Fq)} t^n & = 1 + \sum_{n = 1}^\infty \sp_{q^{-1}}(\ch_{F, 1} (X^n)) t^n \\
      & = \sp_{q^{-1}}\left( \sum_{n \geq 0 } \ch_{F, 1}(X^n) t^n\right).
    \end{aligned}
    \]
    By \Cref{thm: F enhanced generating series of tensor power}, and the fact that $\sp_{q^{-1}}$ is a ring homomorphism, we can further evaluate (where $V = \bigoplus_r H^r(X)$)
    \[
    \begin{aligned}
      \sp_{q^{-1}}\left( \sum_{n \geq 0 } \ch_{F, 1}(X^n) t^n\right) 
      & = \prod_{\chi} \sp_{q^{-1}}\left(1 + \sum_{n = 1}^\infty h_n \chi^n t^n \right)^{[V_\chi]} \\
      & = \prod_{\chi} \prod_{i \geq 0} \left(\frac{1}{1 - q^{-i} \chi t}\right)^{[V_\chi]} \\
      & = \prod_{i \geq 1} \zeta_X(q^{-i}t),
    \end{aligned}
    \]
    where the second equality follows from the sum-product formula for generating series of $h_n$ ($x_n$ are the polynomial variables in $\mathsf{SymPoly}$)
    \[
      \sum_{n \geq 0} h_n t^n = \prod_{n = 1}^\infty \frac{1}{1 - x_n t}
    \]
    and the third equality follows from the product formula of the Weil zeta function
    \[
      \zeta_X(t) = \prod_\chi \left(\frac{1}{1 - \chi qt}\right)^{[V_\chi]}. \qedhere
    \]
  \end{proof}

\subsection{Betti Zeta function} 
\label{sub: betti zeta function}
  Finally we prove a generating series for Poincare series of $C_n(X)$ using similar computations as above. To fully demonstrate the parallelism, let us also define the Betti analog of the zeta function.
  \begin{definition}[Betti analog of zeta function]
    Let $X$ be a variety over an algebraically closed field, we define its Betti zeta function $\zeta_X^\mathrm{B}(t)$ to be the generating series of Poincare series of $\Sym^n X$
    \[
      \zeta_X^\mathrm{B}(t) = \sum_{n = 0}^\infty P_u(\Sym^n X)t^n.
    \]
  \end{definition}

  The following Betti analog of Weil conjecture was proved by I. G. Macdonald in 1962.
  \begin{theorem}[\cite{macdonald1962poincare}]
  \label{thm: Betti analog weil conjecture Macdonald formula}
    Let $X$ be a variety over an algebraically closed field, let $P_u(X) = \sum_{i \geq 0} \dim H^i(X) (-u)^i$ be the Poincare polynomial of $X$. We have 
    \[
      \zeta_X^\mathrm{B}(t) = \left(\frac{1}{1 - t}\right)^{P_u(X)} = \prod_{i \geq 0} \left(\frac{1}{1 - u^i t}\right)^{(-1)^i \dim H^i(X)}
    \]
  \end{theorem}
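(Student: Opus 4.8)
The plan is to deduce the formula from \Cref{thm: F enhanced generating series of tensor power} applied to the trivial endomorphism $F = \mathrm{id}$, together with the classical description of the rational cohomology of a symmetric product as a ring of invariants. Write $V = H^*(X)$, a finite-dimensional graded vector space (finite-dimensionality holds because $X$ is a variety, so its $\ell$-adic or singular cohomology vanishes above degree $2\dim X$ and is finite in each degree). Since $\Sym^n X = X^n/S_n$ is a finite quotient and we work with characteristic-zero coefficients, the transfer argument identifies $H^*(\Sym^n X) \cong H^*(X^n)^{S_n}$, and the K\"unneth isomorphism $H^*(X^n) \cong V^{\otimes n}$ is $S_n$-equivariant for the \emph{signed} $S_n$-action on $V^{\otimes n}$ --- the Koszul sign rule on odd-degree classes --- which is exactly the action fixed before \Cref{thm: F enhanced generating series of tensor power}. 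Because the multiplicity of the trivial $S_n$-representation inside a graded $S_n$-rep $W$ equals $\langle h_n, \ch_u(W)\rangle$ (as $\ch(\mathrm{triv}) = s_{(n)} = h_n$ and the $s_\lambda$ form an orthonormal basis), taking graded dimensions gives
\[
  P_u(\Sym^n X) = \langle h_n, \ch_u(V^{\otimes n})\rangle = \langle h_n, \ch_u(X^n)\rangle .
\]

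Next I would pass to generating series. The linear functional $\langle h_n, - \rangle$ on degree-$n$ symmetric functions coincides with the ring homomorphism $\sp_0 \colon \mathsf{SymPoly} \to \Q$, $f \mapsto f(1, 0, 0, \dots)$: indeed $\langle h_n, p_\lambda\rangle = 1 = p_\lambda(1,0,0,\dots)$ for every $\lambda \vdash n$, so the two functionals agree on the power-sum basis. Applying $\sp_0$ term by term to the identity of \Cref{thm: F enhanced generating series of tensor power} with $F = \mathrm{id}$ --- where all the eigenvalues $\chi$ equal $1$, so the factor $1 + \sum_{m \geq 1} h_m (u^i t)^m$ carries exponent $(-1)^i \dim H^i(X)$ --- and using that $\sp_0\!\left(1 + \sum_{m\geq 1} h_m s^m\right) = \sum_{m \geq 0} s^m = \frac{1}{1-s}$, one obtains
\[
  \zeta_X^{\mathrm B}(t) = \sum_{n \geq 0} P_u(\Sym^n X)\, t^n = \sp_0\!\left( \sum_{n\geq 0} \ch_u(V^{\otimes n})\, t^n \right) = \prod_{i \geq 0}\left(\frac{1}{1 - u^i t}\right)^{(-1)^i \dim H^i(X)} ,
\]
which is the asserted product formula, the shorthand $(1-t)^{-P_u(X)}$ denoting this product under the exponentiation convention of \Cref{thm: F enhanced generating series of tensor power}.

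The one step that is not purely formal is the identification $H^*(\Sym^n X) \cong (H^*(X)^{\otimes n})^{S_n}$ as \emph{signed} $S_n$-modules, and this is the point that needs care. That cohomology of a finite quotient is the invariants is standard over a field of characteristic $0$ (averaging over $S_n$ is available), but one must keep track of the Koszul signs coming from the K\"unneth comparison, so that odd cohomology contributes via the sign representation rather than the trivial one --- this is precisely what makes the alternating exponents $(-1)^i$ appear and keeps the computation compatible with the signed conventions used throughout \Cref{sub: generating series and zeta function}. Alternatively, one could avoid \Cref{thm: F enhanced generating series of tensor power} altogether and reprove the formula by stratifying $\Sym^n X$ according to the partition recording point multiplicities and summing the contributions of the strata (each a quotient of a configuration space), which is essentially Macdonald's original argument; but with the machinery already developed above, the route via tensor powers is the shortest.
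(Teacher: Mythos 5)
The paper does not prove this result; it is cited from Macdonald's 1962 paper and used as a black box. Your derivation is nonetheless correct and fits naturally into the machinery of \Cref{sub: generating series and zeta function}. The key observation that the Hall pairing $\langle h_n, -\rangle$ on degree-$n$ symmetric functions agrees with the principal specialization $\sp_0\colon f \mapsto f(1,0,0,\dots)$ is right: both send every $p_\lambda$ with $\lambda \vdash n$ to $1$, hence agree on the power-sum basis. Combined with the transfer identification $H^*(\Sym^n X;\Q) \cong (H^*(X;\Q)^{\otimes n})^{S_n}$ for the Koszul-signed $S_n$-action (which is indeed the one fixed before \Cref{thm: F enhanced generating series of tensor power}), this gives $P_u(\Sym^n X) = \langle h_n, \ch_u(X^n)\rangle = \sp_0(\ch_u(X^n))$, and then applying $\sp_0$ to \Cref{thm: F enhanced generating series of tensor power} with $F=\mathrm{id}$ produces the product formula, using $\sp_0\bigl(1+\sum_{m\geq 1} h_m s^m\bigr) = \frac{1}{1-s}$. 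This also makes transparent that Macdonald's formula is the $\sp_0$-shadow of the very identity whose $\sp_{u^2}$-shadow is the paper's \Cref{thm: generating series for Betti numbers}, which is a satisfying structural explanation for why the paper can quote Macdonald in that proof. The only step that is not purely formal is the signed-invariants identification of $H^*(\Sym^n X)$, which you correctly flag; it is standard over characteristic-zero coefficients (finite-group transfer plus graded-commutative K\"unneth). One small inaccuracy in your closing aside: Macdonald's original argument is essentially the tensor-power/invariants computation you carried out, not a stratification of $\Sym^n X$ by partition type, so the two routes you describe are not actually distinct.
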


  Parallel to \Cref{thm: zeta product formula}, we have the following Betti version.
  \begin{theorem}
  \label{thm: generating series for Betti numbers}
  We have the following product formula 
  \begin{equation}
  \label{eqn: generating series Betti numbers}
    1 + \sum_{n = 1}^\infty P_u(\Coh_n(X))t^n = \prod_{i, j \geq 0} \left(\frac{1}{1 - u^{2i + j} t}\right)^{(-1)^j \dim H^j(X)} = \prod_{i = 0}^\infty \zeta_X^\mathrm{B}(u^{2i}t).
  \end{equation}
  \end{theorem}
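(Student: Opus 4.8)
The plan is to combine the Poincaré polynomial formula for $\Coh_n(X)$ coming from \Cref{formula of cohomology of Cohn} with the generating-series identity of \Cref{thm: F enhanced generating series of tensor power}, specialized in the way the proof of \Cref{thm: zeta product formula} specializes it — but now working with the Betti (untwisted) version rather than the arithmetic one. Concretely, by \Cref{formula of cohomology of Cohn} we have $P_u(\Coh_n(X)) = \sp_{u^2}(\ch_u(X^n))$, so the left-hand side of \eqref{eqn: generating series Betti numbers} equals $\sp_{u^2}\!\left(\sum_{n\ge 0}\ch_u(X^n)\,t^n\right)$, using that $\sp_{u^2}$ is a ring homomorphism $\mathsf{SymPoly} \to \Q[[u]]$ and hence commutes with the infinite sum term by term.

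Next I would apply \Cref{thm: F enhanced generating series of tensor power} with $F = \mathrm{id}$ (so $\ch_{F,u} = \ch_u$ by the remark following \Cref{def: F-enhanced character}) and $V = \bigoplus_r H^r(X)$. Since $F = \mathrm{id}$ has the single eigenvalue $\chi = 1$, the product over $\chi$ collapses, and the theorem gives
\[
  \sum_{n\ge 0}\ch_u(X^n)\,t^n = \prod_{i\ge 0}\left(1 + \sum_{n\ge 1} h_n (u^i t)^n\right)^{(-1)^i \dim H^i(X)}.
\]
Applying $\sp_{u^2}$ and using the standard principal specialization $\sp_q\!\left(\sum_{n\ge 0} h_n s^n\right) = \sum_{n\ge 0} h_n(1,q,q^2,\dots)s^n = \prod_{j\ge 0}\frac{1}{1 - q^j s} $ evaluated at the single-variable level — more precisely $\sp_q(1 + \sum_{n\ge1} h_n s^n) = \frac{1}{1-s}$ when we only need $\sp_q$ of the one-variable generating function is not quite right; rather one uses $\sum_{n\ge 0} h_n(x_1,x_2,\dots) s^n = \prod_j (1 - x_j s)^{-1}$ and then sets $x_j = (u^2)^{j-1}$... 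I will instead mimic the curve case directly: $\sp_{u^2}\!\left(1 + \sum_{n\ge1} h_n (u^i t)^n\right) = \frac{1}{1 - u^i t}\cdot\frac{1}{1-u^{i+2}t}\cdots$ — no. The clean statement I will invoke is exactly the one used in the proof of \Cref{thm: zeta product formula}: $\sp_{q^{-1}}\!\left(1 + \sum_n h_n \chi^n t^n\right) = \prod_{j\ge 0}(1 - q^{-j}\chi t)^{-1}$, whose Betti analogue (replacing $q^{-1}$ by $u^2$ and $\chi$ by $1$, and rescaling $t \mapsto u^i t$) reads $\sp_{u^2}\!\left(1 + \sum_n h_n (u^i t)^n\right) = \prod_{j\ge 0}(1 - u^{2j+i} t)^{-1}$.

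Putting this together, the left side of \eqref{eqn: generating series Betti numbers} becomes
\[
  \prod_{i\ge 0}\;\prod_{j\ge 0}\left(\frac{1}{1 - u^{2j+i}t}\right)^{(-1)^i \dim H^i(X)}.
\]
To finish I would compare this with the claimed right-hand side $\prod_{i\ge0}\zeta_X^{\mathrm B}(u^{2i}t)$, expanding $\zeta_X^{\mathrm B}$ via Macdonald's formula \Cref{thm: Betti analog weil conjecture Macdonald formula}: $\zeta_X^{\mathrm B}(s) = \prod_{r\ge 0}(1 - u^r s)^{(-1)^r \dim H^r(X)}{}^{-1}$, so $\prod_{i\ge0}\zeta_X^{\mathrm B}(u^{2i}t) = \prod_{i\ge0}\prod_{r\ge0}(1 - u^{2i+r}t)^{-(-1)^r \dim H^r(X)}$, which matches after renaming $(i,j)\leftrightarrow(j,i)$ and $r\leftrightarrow i$. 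The only genuine bookkeeping point — and the step most likely to hide a subtlety — is justifying that all these infinite products over $i$ and $j$ converge in $\Q[[u]][[t]]$ and may be freely reindexed; this holds because for each fixed power $t^n$ only finitely many factors contribute a nontrivial term in $u$ of bounded degree (as $X$ has finite-dimensional cohomology concentrated in finitely many degrees, $P_u(X)$ is a polynomial, and $u^{2i}t$ raises $u$-degree with $i$), so everything is a well-defined element of $\Q[[u]][[t]]$ and the rearrangements are legitimate. The middle equality in \eqref{eqn: generating series Betti numbers} is then just the definition of $\zeta_X^{\mathrm B}$ unwound once more via Macdonald, or alternatively one reads it off from the double product above by performing the inner product over $j$ first.
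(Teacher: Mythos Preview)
Your approach is correct and essentially identical to the paper's: both use $P_u(\Coh_n(X)) = \sp_{u^2}(\ch_u(X^n))$ (the paper rederives this from the main theorem together with \Cref{strong equivariant formality}, while you cite \eqref{formula of cohomology of Cohn} directly), then apply \Cref{thm: F enhanced generating series of tensor power} with $F=\mathrm{id}$, principally specialize using $\sum_{n\ge 0}h_n t^n=\prod_j(1-x_j t)^{-1}$, and finish by invoking Macdonald's formula \Cref{thm: Betti analog weil conjecture Macdonald formula}. Your exposition meanders in the middle, but the computation you eventually settle on, $\sp_{u^2}\bigl(1+\sum_n h_n(u^i t)^n\bigr)=\prod_{j\ge 0}(1-u^{2j+i}t)^{-1}$, and the subsequent matching of double products is exactly what the paper does (the paper just says ``the same computation as in \Cref{thm: zeta product formula}'').
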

  \begin{proof}
    The Poincare polynomial of $C_n(X)$ equals that of $S_n(X)$, thus by \Cref{poincare polynomial of SnX} we have$P_u(C_n(X)) = \phi_n(u^2)\sp_{u^2}(\ch_u(X^n))$. Furthermore, the equivariant formality \Cref{equivariant formality} implies that $P_u(\Coh_n(X)) = P_u(C_n(X)) P_u(B\GL_n)$. Thus $P_u(\Coh_n(X))$ simply equals to $\sp_{u^2}(\ch_u(X^n))$.

    Now an application of \Cref{thm: F enhanced generating series of tensor power} (for $F = id$), and the same computation as in \Cref{thm: zeta product formula} shows
    \[
    1 + \sum_{n = 1}^\infty \sp_{u^2}(\ch_u (X^n)) t^n = \prod_{i, j \geq 0} \left(\frac{1}{1 - u^{2i + j} t}\right)^{(-1)^j \dim H^j(X)} = \prod_{i = 0}^\infty \zeta_X^\mathrm{B}(u^{2i}t).
    \]
    where the last equality follows from Macdonald's formula (\Cref{thm: Betti analog weil conjecture Macdonald formula}).
  \end{proof}

  \begin{corollary}
  \label{cor: Betti number stabilizes}
    When $X$ is connected, the Betti numbers of $C_n(X)$ stabilizes as $n \to \infty$
    \[
      \lim_{n \to \infty} P_u\left(C_n(X)\right) = \phi(u^2) \lim_{t \to 1} (t - 1) \prod_{i = 0}^\infty \zeta_X^\mathrm{B}(u^{2i}t).
    \]
  \end{corollary}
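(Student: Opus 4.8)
The result is essentially immediate from \Cref{thm: generating series for Betti numbers} together with a partial-fraction analysis of the resulting generating series, so the plan is as follows. First I would invoke \Cref{formula of cohomology of Cohn}, which gives $P_u(C_n(X)) = \phi_n(u^2)\,P_u(\Coh_n(X))$, and note that in $\Q[[u]]$ the coefficients of $\phi_n(u^2) = \prod_{i=1}^{n}(1-u^{2i})$ are independent of $n$ for $n$ large, where they agree with the coefficients of $\phi(u^2)$. Since coefficient-wise stabilization of power series is preserved by products, it then suffices to show that the Betti numbers of $\Coh_n(X)$ stabilize and that $\lim_{n\to\infty}P_u(\Coh_n(X)) = \lim_{t\to1}(1-t)\prod_{i\ge0}\zeta_X^\mathrm{B}(u^{2i}t)$; multiplying by $\phi(u^2)$ then gives the corollary.

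Next I would set $G(t) := 1 + \sum_{n\ge1}P_u(\Coh_n(X))\,t^n$, which by \Cref{thm: generating series for Betti numbers} equals $\prod_{i\ge0}\zeta_X^\mathrm{B}(u^{2i}t)$, and study it near $t=1$. By Macdonald's formula \Cref{thm: Betti analog weil conjecture Macdonald formula}, $\zeta_X^\mathrm{B}(t) = \prod_{j\ge0}(1-u^{j}t)^{-(-1)^j\dim H^j(X)}$; since $X$ is connected, $\dim H^0(X) = 1$, so the $j=0$ part of the $i=0$ factor contributes a single $(1-t)^{-1}$, while every remaining factor of $\zeta_X^\mathrm{B}(t)$ and every factor $\zeta_X^\mathrm{B}(u^{2i}t)$ with $i\ge1$ is a unit at $t=1$. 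Hence $G(t) = (1-t)^{-1}H(t)$ with
\[
  H(t) = \Bigl(\textstyle\prod_{j\ge1}(1-u^{j}t)^{-(-1)^j\dim H^j(X)}\Bigr)\prod_{i\ge1}\zeta_X^\mathrm{B}(u^{2i}t) \in \Q[[u]][[t]],
\]
so that $P_u(\Coh_n(X)) = [t^n]G(t) = \sum_{m=0}^{n}[t^m]H(t)$.

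Finally I would upgrade this to genuine stabilization of each individual Betti number. The key point is that every factor of $H(t)$ is of the form $\sum_{a\ge0}c_a t^a$ with $c_a$ divisible by $u^{a}$ — it is either $(1-u^{j}t)^{\pm\dim H^j(X)}$ with $j\ge1$, whose $t^a$-coefficient is divisible by $u^{ja}$, or $\zeta_X^\mathrm{B}(u^{2i}t)$ with $i\ge1$, whose $t^a$-coefficient is divisible by $u^{2ia}$ — so $[t^m]H(t)$ is divisible by $u^{m}$. Consequently, for each fixed $k$ the coefficient $[u^k][t^m]H(t)$ vanishes whenever $m > k$, and therefore $[u^k]P_u(\Coh_n(X)) = [u^k]H(1)$ for all $n \ge k$. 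This is precisely the asserted stabilization, with limit $H(1) = \lim_{t\to1}(1-t)G(t)$; combined with the first paragraph it yields
\[
  \lim_{n\to\infty}P_u(C_n(X)) = \phi(u^2)\,\lim_{t\to1}(1-t)\prod_{i=0}^{\infty}\zeta_X^\mathrm{B}(u^{2i}t).
\]
I expect the only real subtlety to be the bookkeeping around the singularity at $t=1$: connectedness of $X$ is exactly what forces this pole to be simple, so that $G(t)$ is a power series divided by $(1-t)$. If $X$ were disconnected the pole order would be $\dim H^0(X) > 1$ and the Betti numbers of $C_n(X)$ would grow in $n$ rather than stabilize, so the hypothesis cannot be dropped here; beyond that I anticipate no deeper difficulty.
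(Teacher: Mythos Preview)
Your proof is correct and takes a genuinely different route from the paper's. The paper's argument is a two-line sketch: it invokes representation stability of $H^*(X^n)$ and $H^*(\GL_n/\T_n)$ to conclude that $P_u(C_n(X))/\phi_n(u^2)$ stabilizes, and then asserts that the residue of the generating series at $t=1$ identifies the limit via a ``telescoping series''. Your approach is more elementary and fully self-contained: you extract the simple pole $(1-t)^{-1}$ explicitly from Macdonald's product using connectedness of $X$, and then prove stabilization directly via the divisibility bound $u^m \mid [t^m]H(t)$, which simultaneously yields the limit formula. The paper's route buys a conceptual explanation (stabilization as an instance of a general phenomenon), while yours makes the mechanism transparent at the level of power series and avoids importing any outside theory; your observation that connectedness is exactly what forces the pole to be simple is also sharper than anything the paper says.

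One small remark: your final identity has $(1-t)$ where the stated corollary has $(t-1)$. A check on $X=\A^1$, where $C_n(X)=\A^{n^2}$ and hence $P_u(C_n(X))=1$ for all $n$, shows that your sign is the correct one, so this is a typo in the statement rather than an error on your part.
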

  \begin{proof}
    The stability of $P_u(C_n(X))/\phi_n(u^2)$ follows from representation stability of $H^*(X^n)$ and $H^*(\GL_n /\T_n)$. The residue at $t = 1$ of \Cref{eqn: generating series Betti numbers} creates a telescoping series, hence the corollary.
  \end{proof}

\section{A Hermitian variant over $\R$}
\label{sec: a hermitian variant}
We conclude by presenting a Hermitian version of our main result. In this section we assume our ground field $k = \R$. 

Let $X = \Spec R$ be an affine scheme of finite type defined over $k$. The algebraic set $C_n(X)(\C)$ is now equipped with an involution $(-)^\dagger$, sending commuting complex matrices to their conjugate transposes. 

\begin{definition}
The topological space of Hermitian points $HC_n(X) \subset C_n(X)(\C)$ is the fixed locus of the involution $(-)^\dagger$.
\end{definition}

More canonically, $(-)^\dagger$ takes a homomorphism $\phi \colon R \to \Mn(\C)$ to the composite $R \stackrel{\phi}{\to} \Mn(\C) \stackrel{{}^\dagger}{\to} \Mn(\C)$. 
We remark that one may generalize the definition of $HC_n(X)$ to quasi-projective schemes $X$ over $\R$ as well, and results in this section will hold in that generality. But for the sake of simplicity we continue to assume $X$ is affine.

In down to earth terms, $HC_n(X)$ is the set of commuting Hermitian matrices satisfying the defining equations of $X$. Since the spectrum of Hermitian matrices consists of real numbers, the support map now restricts to $HC_n(X) \to \Sym^n (X(\R))$.

We define the Hermitian semi-simple counterpart here using the unitary group $\U_n$ and its subgroup of diagonal unitary matrices $\UT_n$.

\begin{definition}
The Hermitian semi-simple counterpart $HS_n(X)$ is the quotient of $X(\R)^n \times \U_n/\UT_n$ by the diagonal $S_n$ action.
\end{definition}

Analogous to the map $\sigma$, we have a morphism $\sigma_H \colon HS_n(X) \to HC_n(X)$ over $\Sym^n (X(\R))$. This map takes an ordered tuple of real points and a decomposition of $\C^n$ into orthogonal lines (parametrized by $\U_n/\UT_n$) and constructs the corresponding semisimple Hermitian operator. 

\begin{remark}
  The spaces $HS_n(X)$ and $HC_n(X)$ are in fact real Galois twists of the original $S_n(X)$ and $C_n(X)$. On $S_n(X)$ we twist the complex conjugation by the inverse transpose operation on the factor $\GL_n/\T_n$, and on $C_n(X)$ we twist by matrix transpose. Note that the morphism $\sigma$ is invariant under these involutions, therefore $\sigma_H \colon HS_n(X) \to HC_n(X)$ is merely the Galois twist of $\sigma$. 
\end{remark}

\begin{theorem}[\Cref{thm: hermitian version}]
  Let $p_1 \colon HS_n(X) \to \Sym^n X(\R)$ and $p_2 \colon HC_n(X) \to \Sym^n X(\R)$ denote the canonical projections, we have a sheaf theoretic isomorphism $\sigma_H^* \colon Rp_{2*}\uQ \iso Rp_{1*}\uQ$. In particular, $\sigma_H$ induces isomorphism between the rational cohomology of $HC_n(X)$ and $HS_n(X)$.
\end{theorem}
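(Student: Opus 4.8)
The plan is to mirror the relative argument of \Cref{thm: fiberwise isomorphism} over $\Sym^n X(\R)$, replacing the algebraic/$\ell$-adic machinery by classical topology on real points. First I would fix a zero-cycle $D_\lambda = \lambda_1[x_1] + \cdots + \lambda_m[x_m] \in \Sym^n X(\R)$ with each $x_i \in X(\R)$, and describe the two fibers. The fiber $p_1^{-1}(D_\lambda)$ of $HS_n(X)$ is $(\U_n/\UT_n)/S_\lambda$. For $p_2^{-1}(D_\lambda)$: a point of $HC_n(X)$ over $D_\lambda$ is a Hermitian operator on $\C^n$ whose spectrum is supported on $\{x_i\}$ with multiplicity $\lambda_i$, i.e. an orthogonal decomposition $\C^n = \bigoplus_i V_i$ together with, on each generalized eigenspace $V_i$, the structure of a module over the completed local ring $R_i/\mathfrak{m}_i^{\lambda_i}$ that is also compatible with the Hermitian form. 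Concretely the $x_i$-component is a tuple of commuting \emph{nilpotent} matrices on $V_i$ (the generators of $R_i$ act by $(\text{eval at }x_i) + \text{nilpotent}$), with no Hermitian constraint surviving on the nilpotent parts once we pass to the fixed locus — a Hermitian matrix with all eigenvalues $x_i$ \emph{is} $x_i \cdot \mathrm{Id}$. So the genuinely nilpotent directions are killed: the fiber $p_2^{-1}(D_\lambda)$ deformation-retracts onto its semisimple locus, which is the partial flag manifold $Z = \U_n/\prod_i \U_{\lambda_i}$ (equivalently the real Grassmannian-type homogeneous space of orthogonal decompositions of $\C^n$ of type $\lambda$). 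This is the Hermitian analogue of \Cref{contractibility of nil}, and here it is unconditional because the ``$\mathrm{Nil}$'' factor is a single point: Hermitian nilpotent is zero.

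Next I would handle the stalk/base-change statements. Both $p_1$ and $p_2$ are proper maps of compact (or at least locally compact, finite-CW) spaces when we work with real points — indeed $\U_n/\UT_n$, $\U_n$, and $X(\R)$ are all nice, and the support map out of a space of Hermitian matrices with bounded spectrum is proper — so ordinary proper base change for $Rp_{i*}\uQ$ over $\Sym^n X(\R)$ identifies stalks with fibre cohomology directly, with no need for the AP-formalism of \Cref{JH is in AP}. (Alternatively, invoke the Galois-twist remark: $HC_n(X)$ and $HS_n(X)$ are the $\Gal(\C/\R)$-twists of $C_n(X)$ and $S_n(X)$ along the transpose/inverse-transpose involutions, and $\sigma$ is equivariant, so one could try to descend \Cref{thm: fiberwise isomorphism} directly — but the cleanest route is the self-contained topological one.) Then, exactly as before, $\sigma_H$ restricted to $p_1^{-1}(D_\lambda)$ factors through the retract $Z$ and induces the map of homogeneous spaces
\[
  \sigma_H \colon (\U_n/\UT_n)/S_\lambda \;\longrightarrow\; \U_n/\textstyle\prod_i \U_{\lambda_i},
\]
a fibre bundle with typical fibre $\prod_i \bigl(\U_{\lambda_i}/N(\UT_{\lambda_i})\bigr)$.

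The remaining point is that each $\U_m/N(\UT_m)$ has trivial rational cohomology. This is the Hermitian counterpart of the fact (\Cref{thm: graded character of the flag variety}, remark) that $\GL_m/N(\T_m)$ is rationally acyclic: $H^*(\U_m/\UT_m;\Q) = H^*(\B_m;\Q)$ is the regular representation of $S_m = N(\UT_m)/\UT_m$ acting by the Weyl-group action, whose $S_m$-invariants are $H^*(\mathrm{pt})$, and since $\UT_m \to N(\UT_m)$ induces an iso on rational cohomology of classifying data here, $H^*(\U_m/N(\UT_m);\Q) = H^*(\U_m/\UT_m;\Q)^{S_m} = \Q$. (One can also see $\U_m/N(\UT_m)$ as $\UConf_m$ of lines, or run the Borel spectral sequence for $\UT_m \hookrightarrow N(\UT_m)$.) Feeding this into the Leray spectral sequence for the bundle $\sigma_H$ above forces degeneration and gives $\sigma_H^* \colon H^*(p_2^{-1}(D_\lambda)) \isoto H^*(p_1^{-1}(D_\lambda))$ for every $D_\lambda$. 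Combined with proper base change this upgrades to the sheaf isomorphism $\sigma_H^*\colon Rp_{2*}\uQ \isoto Rp_{1*}\uQ$ in $D^b_c(\Sym^n X(\R))$, and taking hypercohomology yields the stated isomorphism on rational cohomology.

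The main obstacle I anticipate is the fibre description of $p_2^{-1}(D_\lambda)$ and the retraction onto its semisimple locus: one must check carefully that imposing the Hermitian (fixed-locus) condition really does collapse the nilpotent Jordan structure — that there is a $\U_n$-equivariant deformation retract of the space of type-$\lambda$ Hermitian operators onto the semisimple ones — and that this retract is compatible with the $p_2$-projection so it sheafifies over $\Sym^n X(\R)$. Everything else (properness, the homogeneous-space bundle structure, rational acyclicity of $\U_m/N(\UT_m)$) is a direct real-topology transcription of the arguments already given for $\sigma$.
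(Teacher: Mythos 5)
Your proof is correct and follows essentially the same route as the paper: fiberwise analysis over $\Sym^n X(\R)$, properness of $p_1, p_2$ (so ordinary proper base change suffices), and rational acyclicity of $\U_m/N(\UT_m)$. The one thing to note is that the ``main obstacle'' you anticipate is not there: since commuting Hermitian matrices are simultaneously unitarily diagonalizable (spectral theorem for commuting self-adjoint operators), there is no nilpotent structure and no deformation retraction to construct --- the fiber $p_2^{-1}(D_\lambda)$ \emph{is} the homogeneous space $\U_n/\prod_i \U_{\lambda_i}$ on the nose, which is exactly how the paper argues.
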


\begin{proof}
We analyze the map $\sigma_H$ fiberwise over $\Sym^n (X(\R))$. Since commuting Hermitian matrices are simultaneously diagonalizable by a unitary transformation, we have:
\begin{enumerate}
    \item The map $\sigma_H$ is surjective.
    \item All finite sheaves appearing in $HC_n(X)$ are semisimple (they have no nilpotent parts).
    \item The following maps in the diagram are all proper maps
    \[
    \begin{tikzcd}
      HS_n(X) \ar[rr, "\sigma_H"] \ar[rd, "p_1"'] & & HC_n(X) \ar[ld, "p_2"] \\
      & \Sym^n X(\R) &
    \end{tikzcd}
    \]
\end{enumerate}

Let $z_\lambda \in \Sym^n (X(\R))$ correspond to a partition $\lambda \vdash n$. The fiber of $HC_n(X) \to \Sym^n (X(\R))$ above $z_\lambda$ consists of matrices unitarily equivalent to a fixed diagonal matrix with eigenvalues determined by $z_\lambda$. This fiber is a single orbit under the unitary group $\U_n$. The stabilizer is the product of unitary groups corresponding to the eigenspaces, $\prod_i \U_{\lambda_i}$. Thus, the fiber is isomorphic to the homogeneous space $\U_n/\prod_i \U_{\lambda_i}$.

The fiber of $HS_n(X) \to \Sym^n (X(\R))$ above $z_\lambda$ is isomorphic to $(\U_n/\UT_n)/S_\lambda$.

The map $\sigma_H$ restricts to a surjective map on the fibers:
\[
\sigma_H \colon (\U_n/\UT_n)/S_\lambda \to \U_n/\prod_i \U_{\lambda_i}.
\]
This is a fibration. The fibers are isomorphic to $\left(\prod_i \U_{\lambda_i}\right) / N(\prod_i \UT_{\lambda_i})$. Similar to the complex case, the spaces $\U_m/N(\UT_m)$ have trivial rational cohomology. Therefore, $\sigma_H$ induces an isomorphism on the cohomology of the fibers. By proper base change theorem, fiberwise cohomology isomorphism implies isomorphism of stalks $(Rp_{2*}\uQ)_{z_\lambda} \to (Rp_{1*}\uQ)_{z_\lambda}$, thus we obtain the sheaf theoretic isomorphism.
\end{proof}

\printbibliography
\end{document}